\newtheorem{theorem}{Theorem}[section]
\newtheorem{corollary}[theorem]{Corollary}
\newtheorem{lemma}[theorem]{Lemma}
\theoremstyle{definition}
\newtheorem{definition}[theorem]{Definition}
\newtheorem{remark}[theorem]{Remark}
\numberwithin{equation}{section}
\newtheorem{question}[theorem]{Question}
\newtheorem{conjecture}[theorem]{Conjecture}
\newcommand{\seq}[1]{\langle #1\rangle}
\newcommand{\ns}{\varnothing}
\newcommand{\cont}{\mathcal C}
\newcommand{\homeo}{\mathcal H}
\newcommand{\surj}{\mathcal S}
\newcommand{\shad}{\mathcal{T}}
\newcommand{\N}{\omega}
\begin{document}

%%%%% To ease editing, for IMPAN journals add:

\baselineskip=17pt

%%%%%%%%%%%%%%%%

\title[Sample paper]{A sample paper for IMPAN journals}

\title[Shadowing in One-Dimension]{On Genericity of Shadowing in One Dimension}

\author[J. Meddaugh]{Jonathan Meddaugh}
\address[J. Meddaugh]{Department of Mathematics, Baylor University, Waco TX, 76798}
\email[J. Meddaugh]{Jonathan\_Meddaugh@baylor.edu}

\date{}

\begin{abstract} We show that shadowing is a generic property among continuous maps and surjections on a large class of locally connected one-dimensional dynamical systems.
\end{abstract}

\subjclass[2000]{54H20, 37B10, 37B20 }
\keywords{shadowing, dendrite, graph-like, pseudo-orbit tracing property, topological dynamics}

\maketitle

\section{Introduction} One of the most well-studied ideas in topological dynamics is that of the \emph{stability} of a dynamical system. There are many appropriate notions of stability--hyperbolicity, Lyapunov stability, topological stability, structural stability, among many others. Several of these notions have connections to the \emph{pseudo-orbit tracing property} (\emph{shadowing property}) of dynamical systems \cite{Pilyugin-LipStruct,Slackov,WaltersFiniteType}. The shadowing property was initially studied by Anosov \cite{Anosov} and Bowen \cite{Bowen} and it (along with its variations) has been a very active area of study since \cite{Brian-Ramsey,BMR-Variations,Coven,Fakhari,GrebogiYorke,sakai,Mai,MR,Ombach2,Oprocha-Thick,palmer,Pilyugin-Orbit,shimomura,WaltersFiniteType}.

Informally, systems with shadowing have the property that the behaviors witnessed by the \emph{pseudo-orbits} of a system (i.e. orbits with some allowed amount of error) are representative of true behaviors of the system in the sense that there every pseudo-orbit has an \emph{orbit} which shadows it. Since pseudo-orbits arise naturally in computed orbits of dynamical systems, shadowing is a very useful property in computer-assisted dynamics, as we are guaranteed that computer-generated orbits are indeed representative of actual orbits for the system. As such, classifying systems which have shadowing is quite useful. In a variety of contexts, full and partial characterizations exist. In particular, in shift spaces \cite{WaltersFiniteType} and in the class of tent maps on the interval \cite{Coven} the shadowing property has been completely characterized. Partial characterizations exist in other contexts, including the action of a quadratic polynomial on its Julia set \cite{BMR-ToAppear, BR-ToAppear}. However, in more general classes of dynamical systems, the characterization problem is intractable.

It is thus reasonable to ask whether a system on a given topological space might be expected to have shadowing. More specifically, given a compact metric space $X$, let $\cont(X)$ denote the space of continuous self-maps on $X$ with topology induced by the supremum metric. Let $\shad(X)$ denote the subset of $\cont(X)$ consisting of those maps with shadowing. We can then re-frame the question of whether a dynamical system might be expected to have shadowing by instead asking whether $\shad(X)$ is a \emph{generic} subset of $\cont(X)$, i.e. whether $\shad(X)$ contains a dense $G_\delta$ subset of $\cont(X)$. Determining the genericity of shadowing in the classes $\surj(X)$ and $\homeo(X)$ of surjections and of homeomorphisms, respectively is also useful.

The question of genericity of shadowing has been examined for some time in a variety of specific contexts. In particular, many genericity results exist for manifolds. Yano demonstrated that shadowing is generic among homeomorphisms of the unit circle \cite{Yano}, Odani extended this result to smooth manifolds of dimension at most three \cite{Odani}, and Pilyugin and Plamanevskaya  further extended this result to compact manifolds without boundary which have a handle decomposition \cite{Pilyugin-Plam}. Results concerning the more general class of continuous maps include those of Mizera, who demonstrated that shadowing is generic in $\cont(X)$ where $X$ is an arc or circle \cite{Mizera}, and those of Ko\'scielniak, Mazur, Oprocha and Pilarzyk, who extended this result to $\cont(X)$ and $\surj(X)$ where $X$ is a compact manifold \cite{Mazur-Oprocha}. 

For less homogeneous spaces $X$, far less is known. Recently, it has been shown that these results extend to the class of continuous maps on dendrites, i.e.
locally connected, uniquely arc-wise connected continua \cite{BMR-Dendrites}. This is a class of spaces which include acyclic graphs and the Julia sets of a large class of quadratic polynomials \cite{Car-Gam}. Shadowing is also known to be generic in the class of chainable continua which retract onto subarcs in a reasonable way \cite{KMPK}. The main goal of this paper is to extend this result to a yet larger class of locally connected continua. We define the class of \emph{graphites} which consist of those locally connected continua which appropriately retract onto subgraphs. This larger class of continua includes the Menger curve, the Sierpinski carpet and the Sierpinski gasket. We then demonstrate that shadowing is a generic property in the classes of continuous maps and surjections on graphites.

The structure of the paper is as follows. In Section \ref{Prelim}, we introduce terminology and prove that graphites have particularly nice open covers. In Section \ref{maps}, we use a construction similar to that of \cite{BMR-Dendrites} to demonstrate that shadowing is generic on graphites. In Section \ref{surj}, we modify this construction to achieve the same result in the space of surjections on graphites. In Section \ref{conc} we pose some questions for future research.

\section{Preliminaries}\label{Prelim}

For a compact metric space $(X,d)$, let $\cont(X)$ denote the space of all continuous maps $f:X\to X$ endowed with the metric given by 
\[\rho(f,g)=\max_{x\in X}d(f(x),g(x)).\] 
Furthermore, let $\surj(X)$ denote the subset of $\cont(X)$ consisting of surjective maps.

Let $f:X\to X$ be a continuous map. For a fixed $\delta>0$, a \emph{$\delta$-pseudo-orbit for $f$} is a sequence $\seq{x_i}_{i\in\N}$ such that $d(x_{i+1},f(x_i))<\delta$ for all $i\in\N$. A sequence $\seq{x_i}$ which is a $\delta$-pseudo-orbit for all $\delta>0$ is called an \emph{orbit}, and it is immediate that there is some $z\in X$ with $x_i=f^i(z)$ for all $i\in\N$.

The map $f:X\to X$ has \emph{shadowing} provided that for every $\epsilon>0$ there exists $\delta>0$ such that for each $\delta$-pseudo-orbit $\seq{x_i}_{i\in\N}$ there exists an orbit $\seq{f^i(z)}_{i\in\N}$ such that $d(x_i,f^i(z))<\epsilon$ for all $i\in\N$; in this case, we say that the pseudo-orbit is $\epsilon$-shadowed by the orbit of $z$. Let $\shad(X)$ denote the subspace of $\cont(X)$ consisting of those maps with shadowing.

The following auxiliary notion will be useful in our construction.
\begin{definition}
	Fix an open cover $\mathcal U=\{U_1,\ldots U_k\}$  of $X$ and a map $h:X\to X$. Define the pattern of $h$ with respect to $\mathcal U$ to be $\phi_{\mathcal U,h}:\{1,\ldots k\}\to 2^{\{1,\ldots k\}}$ as follows. For $i=1,\ldots k$, and $h:X\to X$, let  \[\phi_{\mathcal U,h}(i)=\{j:h({U_i})\cap U_j\neq\ns)\}.\]
	When there is no ambiguity as to which cover is being used, we will simply denote this by $\phi_h$.
\end{definition}
%\begin{definition}
%	Fix a taut open cover $\mathcal U=\{U_1,\ldots U_k\}$  of $X$ such that for $i=1,\ldots k$, $diam(U_i)$ and $diam(f(U_i))$ are less than $\epsilon/3$. For $i=1,\ldots k$, and $h:X\to X$, let \[\phi_h(i)=\{j:f(\overline{U_i})\cap\overline U_j\neq\ns)\}.\]
%\end{definition}

In the sense of \cite{good-meddaugh}, sequences $\seq{j_i}_{i\in\omega}$ in $\{1,\ldots, k\}^\omega$ which satisfy $j_{i+1}\in\phi_{\mathcal U,h}(j_i)$ are the $\mathcal U$-pseudo-orbit patterns for $h$. For the purposes of this paper, it is enough to note that for sufficiently small $\delta$, each $\delta$-pseudo-orbit $\seq{x_i}$ of $h$ has a $\mathcal U$-pseudo-orbit pattern $\seq{U_{j_i}}$ with $x_i\in U_{j_i}$. 

%We will occasionally use the alternative characterization of shadowing developed in \cite{GoodMeddaugh}. For a finite open cover $\mathcal V$ of $X$, a sequence $\seq{x_i}$ is a $\mathcal V$-pseudo-orbit provided that for each $i\in\N$ there exists $V_{i}\in\mathcal V$ with $x_{i},f(x_{i-1})\in V_{i}$. Also, two sequences $\seq{x_i}$ and ${y_i}$ $\mathcal V$-shadow each other provided that for each $i\in\N$ there exists $V_i\in \mathcal V$ with $x_i,y_i\in V_i$. A map $f:X\to X$ has shadowing if and only if for every finite open cover $\mathcal U$ of $X$, there exists a finite open cover $\mathcal V$ such that for every $\mathcal V$-pseudo-orbit $\seq{x_i}$ there exists $z\in X$ such that $\seq{x_i}$ is $\mathcal U$-shadowed by $\seq{f^i(z)}$.

A space $X$ is \emph{path-connected} if for any $x,y\in X$ there exists a continuous map $f:[0,1]\to X$ with $f(0)=x$ and $f(1)=y$. Note that for compact metric spaces, this is equivalent to arc-connectedness, and so the map $f$ can be taken to be an injection. For the purposes of this paper, a \emph{graph} is a compact connected metric space $G$ for which there exists a finite collection of arcs $A_1, A_2,\ldots A_n$ such that for $i\neq j$, $A_i\cap A_j$ is a subset of the endpoints of $A_i$ (and also of $A_j$ by symmetry), with $G=\bigcup A_i$. A graph $G$ has two classes of distinguished points---an \emph{endpoint} of $G$ is a point which is an endpoint of exactly one such arc and a \emph{branchpoint} is a point which is an endpoint of at least three such arcs. Note that every graph is path-connected. %Recall that a free arc in a graph $G$ is a subset of $G$ which is homeomorphic to the arc $[0,1]$ and which is contained in a neighborhood whose closure in $G$ is also homeomorphic to $[0,1]$. Note that an arc in a graph $G$ is free if it contains no branch points of $G$ and since there are only finitely many branch points, each nontrivial connected subset of a graph $G$ contains a free arc.
%A graph $G$ has two classes of distinguished points. The \emph{endpoints} of $G$ are those points $x$ for which BLAH and the \emph{branchpoints} of $G$ are those points $x$ which BLAH.

The collection of spaces in which we are interested are an extension of the collection of graphs in the same sense that the collection of dendrites and dendroids are extensions of the collection of acyclic graphs. Recall that a \emph{dendroid} is an arcwise connected and hereditarily unicoherent continuum (i.e. if $A$ and $B$ are subcontinua with nontrivial intersection, then $A\cap B$ is connected), and that a \emph{dendrite} is a locally connected dendroid (Exercise 10.58 of \cite{Nadler}).  In the 1960's, B. Knaster developed an alternate definition of dendroid--specifically, he defined a dendroid as a continuum $D$ for which for every $\epsilon>0$ there exists a tree $T\subseteq D$ and a retraction $r:D\to T$ that is an $\epsilon$-map (i.e. for all $x\in T$, $\pi^{-1}(x)$ has diameter less than $\epsilon$). While some progress has been made in demonstrating the equivalence of these definitions \cite{Fugate1,Fugate2}, it remains an open problem \cite{OpenProblems2}. It is from this latter notion of dendroid that we draw our analogy.

We call a space $X$ a \emph{graphoid} provided that for all $\epsilon>0$ there exists a graph $G\subseteq X$ and a retraction $\pi_G:X\to G$ which is an \emph{$\epsilon$-map}, i.e. $\pi_G^{-1}(x)$ has diameter less that $\epsilon$ for each $x\in G$. It is worth noting that there may be many such graphs, and for each graph, many such projections. If $X$ is also locally connected, we call $X$ a \emph{graphite}. Note that graphoids and graphites are necessarily one-dimensional, and more specifically \emph{graph-like} (i.e. homeomorphic to an inverse limit of graphs by 2.13 of \cite{Nadler}, the dimension result follows by \cite{Charalambous}) but not every graph-like continuum is a graphoid. 

The class of graphites is indeed much larger than the class of dendrites. Indeed, it includes the Menger curve, which can be seen as follows. Let $M$ denote the Menger curve as a subset of $[0,1]^3$. Note that $M$ retracts onto the edges of the cube $[0,1]^3$ and that this retraction is a $\sqrt{1/3}$-map from $M$ onto a graph contained in $M$. By self-similarity of $M$, we can, for $n\geq 1$, define a $(1/3)^n\sqrt{1/3}$-map from $M$ to a graph contained in $M$ by applying an appropriately scaled version of this retraction to the $20^n$ copies of $M$ with edges of length $(1/3)^n$ which compose $M$. A similar argument demonstrates that the Sierpinski triangle and carpet are also graphites. %The class of graphites also includes a large family of Julia sets of polynomials. Specifically, if $f$ is a hyperbolic polynomial, then by 19.2 and 19.4 of \cite{MilnorComplex}, its Julia set is locally connected (and hence arc-connected) and for each $\epsilon>0$ there are finitely many Fatou components of diameter $\epsilon$ or larger. Additionally, in this case, the boundary of each bounded Fatou component is a Jordan curve \cite{FatouBoundary}. Let $X$ be such a Julia set--that $X$ is a graphite is then seen as follows. Since $X$ is arc-connected, we can easily find a graph $G$ and a neighborhood $N$ of $G$ in $\mathbb C$ such that $G\subseteq X\subseteq N\subseteq \mathbb C$.

The following useful result holds for all graphoids.

\begin{lemma}\label{approx}
	Let $X$ be a graphoid and $f:X\to X$ a continuous map. For every $\eta>0$ there exists a $\lambda>0$ such that if $G\subseteq X$ is a graph for which $\pi_G:X\to G$ is a $\lambda$-map, and $g:G\to X$ is in $B_{\lambda}(f|_{G})$, then $g\circ \pi_G\in B_\eta(f)$. 
\end{lemma}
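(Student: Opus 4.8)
The plan is to estimate $d\bigl(g(\pi_G(x)),f(x)\bigr)$ pointwise for each $x\in X$ and then take the maximum, exploiting the triangle inequality through the intermediate point $f(\pi_G(x))$:
\[
d\bigl(g(\pi_G(x)),f(x)\bigr)\le d\bigl(g(\pi_G(x)),f(\pi_G(x))\bigr)+d\bigl(f(\pi_G(x)),f(x)\bigr).
\]
The first summand is controlled by the hypothesis that $g$ is close to $f|_G$ on $G$, and the second by the uniform continuity of $f$ together with the smallness of the fibers of $\pi_G$. The whole task therefore reduces to choosing $\lambda$ small enough that each summand is below $\epsilon/2$.

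First I would fix the value of $\lambda$. Since $X$ is compact and $f$ is continuous, $f$ is uniformly continuous, so there is an $\eta>0$ such that $d(a,b)<\eta$ implies $d(f(a),f(b))<\epsilon/2$. I then set $\lambda=\min\{\eta,\epsilon/2\}$; this is the promised bound, and it depends only on $f$ and $\epsilon$, not on the particular graph $G$ or map $g$. The key observation for the second summand is that $\pi_G$ is a \emph{retraction} onto $G$, hence idempotent, so $\pi_G(\pi_G(x))=\pi_G(x)$. Consequently both $x$ and $\pi_G(x)$ lie in the single fiber $\pi_G^{-1}(\pi_G(x))$, which has diameter less than $\lambda$ because $\pi_G$ is a $\lambda$-map. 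Thus $d(x,\pi_G(x))<\lambda\le\eta$, and uniform continuity yields $d(f(\pi_G(x)),f(x))<\epsilon/2$.

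For the first summand, note that $\pi_G(x)\in G$, so evaluating $f|_G$ at this point agrees with $f$. Since $g\in B_\lambda(f|_G)$ means $\max_{y\in G}d(g(y),f(y))<\lambda$, taking $y=\pi_G(x)$ gives $d\bigl(g(\pi_G(x)),f(\pi_G(x))\bigr)<\lambda\le\epsilon/2$. Combining the two estimates produces $d\bigl(g(\pi_G(x)),f(x)\bigr)<\epsilon$ for every $x\in X$. Because the map $x\mapsto d\bigl(g(\pi_G(x)),f(x)\bigr)$ is continuous on the compact space $X$, its maximum is attained and is therefore strictly less than $\epsilon$, so $\rho(g\circ\pi_G,f)<\epsilon$, i.e. $g\circ\pi_G\in B_\epsilon(f)$, as required.

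I do not expect a genuine obstacle here: the argument is in essence a uniform $\epsilon/2$ splitting. The only point demanding care is the use of the idempotence of the retraction $\pi_G$ to convert the $\lambda$-map condition into the bound $d(x,\pi_G(x))<\lambda$, since it is this step (rather than merely $\pi_G$ having small fibers) that ties each point $x$ to its image under the composition $g\circ\pi_G$.
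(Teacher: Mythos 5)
Your proof is correct and follows essentially the same route as the paper: the same triangle-inequality split through $f(\pi_G(x))$, with $\lambda$ chosen from uniform continuity at level $\epsilon/2$ and also bounded by $\epsilon/2$. In fact you spell out more carefully than the paper the one delicate point — that idempotence of the retraction $\pi_G$ places both $x$ and $\pi_G(x)$ in the same fiber, giving $d(x,\pi_G(x))<\lambda$ — which the paper's proof states only implicitly (and with an apparent typo).
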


\begin{proof}
	Let $X$ be a graphoid and $f:X\to X$. Fix $\eta>0$. By continuity of $f$, there exists $\lambda>0$ such that if $d(x,y)<\lambda$, then $d(f(x),f(y))<\eta/2$. Choose such a $\lambda$ so that $\lambda<\eta/2$.
	
	Now, let $G$ be a graph satisfying the hypotheses of the Lemma. Fix $g:G\to X$ in $B_{\lambda}(f|_G)$. Then for each $x\in X$, we have $d(\pi_G(x),f(x))<\lambda$ and so 
	\[d(g(\pi_G(x)),f(x))\leq d(g(\pi_G(x)),f(\pi_G(x))) +d(f(\pi_G(x)),f(x))<\lambda+\eta/2.\]
	Thus $\sup_{x\in X}\{d(g(\pi_G(x)),f(x))\}\leq \lambda+\eta/2<\eta$ and so $g\circ \pi_G\in B_\eta(f)$ as desired.
\end{proof}

Recall that the \emph{nerve} of an open cover of a space $X$ is the simplicial complex with vertex set equal to $\mathcal U$ and faces given by collections of elements of $\mathcal U$ with common intersection. For an open cover $\mathcal U$ and $U\in\mathcal U$, the \emph{core of $U$ in $\mathcal U$} is defined to be the  set $U\setminus\bigcup_{V\in\mathcal U\setminus \{U\}}\overline V$. It is important to many constructions that each element of an open cover has nonempty core, and as such, it is often  useful to restrict attention to  \emph{taut} open covers, i.e. those open covers $\mathcal U$ of $X$ which satisfy the following conditions:
\begin{enumerate}
	\item if $U,V\in\mathcal U$ and $\overline U\cap \overline V\neq\ns$, then $U\cap V\neq\ns$, and
	\item no proper subset of $\mathcal U$ covers $X$.
\end{enumerate} 
For taut open covers of connected spaces whose nerves are trees it follows immediately that for all $U\in\mathcal U$ which are not endpoints of the nerve, the core of $U$ in $\mathcal U$ is nonempty (c.f. the treatment of chain covers in Chapter 12 of Nadler \cite{Nadler}). A cover in which every core is nonempty is easily constructed from this by adjoining each element of $\mathcal U$ which corresponds to an endpoint with its neighboring element in $\mathcal U$. It is important, however, to note that there exist taut open covers of connected spaces for which each core is empty. A simple example of this consists of the unit circle and the open cover $\mathcal U=\{(0,3\pi/2),(\pi/2,0), (3\pi/2,\pi/2)\}$ where $(a,b)$ denotes the open arc of the circle from $a$ to $b$ going counterclockwise around the circle. %In fact, if the nerve of a taut open cover does not contain a three cycle, then every core is nonempty. This is easily seen by observing that if the core of $U$ is empty, then there exists $x\in U$ with $x\in \overline V\cap\overline W$ for some $V\neq W\in\mathcal U$ distinct from $U$. Tautness of $\mathcal U$ implies that $W\cap V\neq\ns$, and hence the nerve of $\mathcal U$ contains the three cycle $\{UV,VW,WU\}$.

The following lemma demonstrates that graphites have particularly well structured open covers of arbitrarily small mesh.

\begin{lemma} \label{tautcover}
	Let $X$ be a graphite. For every $\epsilon>0$, there exists a finite open cover  of $X$ consisting of connected sets of diameter less than $\epsilon$ which is taut and whose nerve is a graph containing no three cycles and for which the core of each element is nonempty.
\end{lemma}

\begin{proof}
	
	Fix $\epsilon>0$ and fix $\eta<\epsilon/4$. Since $X$ is a graphite, choose a graph $G\subseteq X$ and a retraction $\pi_G:X\to G$ which is an $\eta$-map. Since $G$ is a graph, it is easy to find an open cover $\mathcal V$ of $G$ consisting of connected sets of diameter less than $\eta$ which is taut and whose nerve is a graph containing no three cycles and for which the core of each element of $\mathcal V$ is nonempty.
	
	Now, define $\mathcal W=\{\pi_g^{-1}(V): V\in\mathcal V\}$ and notice that this is a taut open cover of $X$ with nerve isomorphic to that of $\mathcal V$ and hence has no three cycles. Note that the core of each element of $\mathcal W$ contains the preimage of the core of the corresponding element of $\mathcal V$ and hence is nonempty. Furthermore, if $W\in\mathcal W$ and $x,y\in W$, then $d(x,y)\leq d(x,\pi_G(x))+d(\pi_G(x),\pi_G(y))+d(\pi_G(y),y)<3\epsilon/4$ since $\pi_G(x)$ and $\pi_G(y)$ belong to $\pi_G(W)\in\mathcal V$. However, it is unlikely that the elements of $\mathcal W$ are connected. Since $\mathcal W$ is taut and has no three cycles, we can choose $\gamma>0$ less than $\eta$ such that if $W,W'\in\mathcal W$ and $W\cap W'\neq\ns$, then $d(W,W')>\gamma$.
	
	Towards construction of the claimed cover, for each $W\in \mathcal W$ and $x\in W$, observe that $X$ is locally connected at $x$, and so we can choose a connected open set $N(x,W)$ with $x\in N(x,W)\subseteq W$. Since $X$ is compact, we can choose a finite list of pairs $(x_1,W_1),\ldots, (x_k,W_k)$ in $X\times\mathcal W$ such that $X=\bigcup_{i\leq k}N(x_i,W_i)$. For each $W\in\mathcal W$, define $O(W)$ to be the union of those $N(x_i,W_i)$ which are subsets of $W$. Note that this union contains all sets of the form $N(x_i,W_i)$ with $W_i=W$ but may also contain open sets of the form $N(x_i,W_i)$ with $W_i\neq W$. Notice that for all $W\in\mathcal W$, $W$ has nonempty core, and so there exists $i\leq k$ with $N(x_i,W_i)=N(x_i,W)\subseteq W$. Hence $N(x_i,W_i)\subseteq O(W)\neq\ns$.
	
	For each $W\in\mathcal W$, $O(W)$ is the union of finitely many open connected sets, and therefore has finitely many components, each of which is open. It follows that $\overline{O(W)}$ has finitely many components as well; enumerate them as $C_1(W),\ldots C_{n(W)}(W)$. Fix $\delta>0$ less than $\gamma$ such that if $W,W'\in\mathcal W$, $i\leq n(W)$, $j\leq n(W')$ and $C_i(W)\cap C_j(W')=\ns$, then $d(C_i(W),C_j(W'))>\delta$.

	Now, for each $x\in\partial (O(W))$, choose a connected open neighborhood $B(x,W)$ of $x$ with diameter less than $\delta/2$. Then, for $i\leq n(W)$, define
	\[U_i(W)=C_i(W)\cup\bigcup_{x\in\partial C_i(W)}B(x,W).\]
	
	Observe that for each $i$ and $W$, $U_i(W)$ is open and connected by construction, and is contained in the ball of radius $\gamma$ around $W$. Hence it has diameter less than $\epsilon$, since the diameter of $W$ is less than $3\epsilon/4$ and we have adjoined to it only sets of diameter less than $\epsilon/8$. It also follows that if $C_i(W)\cap C_j(W')=\ns$, then $\overline{U_i(W)}\cap\overline{U_j(W')}=\ns$, hence if $\overline{U_i(W)}\cap\overline{U_j(W')}\neq\ns$, then $U_i(W)\cap U_j(W')\neq\ns$, i.e. the set $\mathcal U=\{U_i(W): W\in\mathcal W, i\leq n(W)\}$ is a taut open cover of $X$ by connected sets of diameter less than $\epsilon$.

	Notice that for a fixed $W\in\mathcal W$, and $i<j\leq n(W)$, we have $U_i(W)\cap U_j(W)=\ns$. Thus, if $U_i(W)$, $U_j(W')$ and $U_k(W'')$ form a three cycle in the nerve of $\mathcal U_0$, then $W$, $W'$ and $W''$ are all distinct and it follows that $W$, $W'$ and $W''$ form a three cycle in $\mathcal W$. Since $\mathcal W$ has no three cycles, it follows that $\mathcal U$ also has no three cycles.
	
	Suppose that $U_i(W)$ has empty core in $\mathcal U$ and $U_i(W)$ is not an endpoint in the nerve of $\mathcal U$. Then there exist $W_1,\ldots, W_m\in\mathcal W$ and indices $j_k\leq n(W_k)$ for $k\leq m$ such that $U_i(W)=\bigcup\left(\overline{U_{j_k}(W_k)}\cap U_i(W)\right)$. But $U_i(W)$ is connected, so there exist $k_0<k_1\leq m$ such that $\overline{U_{j_{k_0}}(W_{k_0})}\cap \overline{U_{j_{k_1}}(W_{k_1})}\neq ns$ and hence ${U_{j_{k_0}}(W_{k_0})}\cap {U_{j_{k_1}}(W_{k_1})}\neq \ns$ . This is a contradiction, as $U_i(W)$, $U_{j_{k_0}}(W_{k_0})$ and $U_{j_{k_1}}(W_{k_1})$ would be a three cycle in the nerve of $\mathcal U_0$.
	
	Thus if $U_i(W)$ has empty core in $\mathcal U$, there exists a unique $W'\in\mathcal W$ and $j\leq n(W')$ such that $U_i(W)\cup U_j(W')$ is connected. Since $U_i(W)\subseteq \overline{U_j(W')}$, the diameter of $U_i(W)\cup U_j(W')$ is equal to that of $U_j(W')$ and thus is less than $\epsilon$.
	
	For each $U_i(W)\in\mathcal U$, define $\hat U_i(W)$ to be the union of $U_i(W)$ and any elements of $\mathcal U_0$ which have empty core and are adjacent to $U_i(W)$. Finally, define
	\[\mathcal {\hat U}=\{\hat U_i(W):W\in\mathcal W', i\leq n(W), \textrm{ and $U_i(W)$ has nonempty core}\}.\]
	
	By construction, $\mathcal {\hat U}$ is an open cover of $X$ by connected sets of diameter less than $\epsilon$. Since $\mathcal {\hat U}$ is derived from $\mathcal U$ in a way that does not introduce any new adjacencies in the nerve of the cover, it is clear that $\mathcal {\hat U}$ inherits both tautness and lack of three cycles from $\mathcal U$, and thus $\mathcal {\hat U}$ is the desired cover of $X$.

\end{proof}

\section{Maps of Graphites} \label{maps}

In this section we prove our main results. The outline of the proof is as follows: fix a graphite $X$ and consider the space $\shad_n(X)\subseteq\cont(X)$ consisting of those maps $f$ such that there exists $\delta>0$ so that every $\delta$-pseudo-orbit is $1/n$-shadowed. A series of lemmas demonstrates that each $\shad_n(X)$ is contains a dense open set in $\cont(X)$. We proceed in a manner similar to that in \cite{BMR-Dendrites}. For every $f\in \cont(X)$ and every $\epsilon>0$, we will construct a map $g\in\cont(X)$ with $\rho(g,f)<\epsilon$ and find a $\gamma>0$ so that $B_\gamma(g)\subseteq B_\epsilon(f)$ and $B_\gamma(g)\subseteq\shad_n(X)$.

%\begin{lemma}
%	Let $X$ be a graphoid and $f:X\to X$ a continuous map. For every $\epsilon>0$ there exists a $\delta>0$ such that if a graph $G$ is $\delta$-dense in $X$, then $\pi_G\circ f|_G\circ \pi_G\in B_\epsilon(f)$. 
%\end{lemma}

\begin{remark}
	For Lemmas \ref{skeleton}, \ref{orbit-filling} and \ref{Tn}, we work with a fixed graphite $X$ and a fixed $f:X\to X$. Also, fix $n\in\N$ and $1/n>\epsilon>0$. Additionally, by Lemma \ref{tautcover}, we can fix a taut open cover $\mathcal U=\{U_1,\ldots U_k\}$  of $X$ by connected sets such that the nerve of the cover is a graph containing no three cycles, the core of each element is nonempty, and such that  for $i\leq k$,  $diam(U_i)$ and $diam(f(U_i))$ are less than $\epsilon/3$.
\end{remark}

Our first goal is to construct a map $g$ near $f$ such that, for each $i\leq k$,   $\phi_{\mathcal U,g}(i)\supseteq \phi_{\mathcal U,f}(i)$ and so that each $\mathcal U$-pseudo-orbit pattern for $g$ is realized by an orbit for $g$. This is accomplished in Lemma \ref{skeleton}. Our second goal is to demonstrate that if $h$ is sufficiently close to $g$, then every $\mathcal U$ pseudo-orbit pattern for $h$ is realized by an orbit for $h$. This is accomplished in Lemma \ref{orbit-filling}. Finally, in Lemma \ref{Tn}, we establish that $g$ and its perturbations belong to $\mathcal T_n$.

% Then, taking $\delta$ equal to the Lebesgue number for $\mathcal U$, and $\seq{x_i}$ a $\delta$-pseudo-orbit for $g$, we see that $\seq{x_i}$ is a $\mathcal U$-pseudo-orbit. Therefore, it is $\mathcal U$-shadowed by an orbit of $g$ and since the elements of $\mathcal U$ have diameter less than $\epsilon$, this orbit $1/n$-shadows $\seq{x_i}$. Thus $g\in\mathcal T_n$. In the process of defining this map $g$, we will also demonstrate that small perturbations of $g$ also belong to $\mathcal T_n$.

\label{key}

\begin{lemma}\label{skeleton}
	There exists a map $g:X\to X$ with $\rho(g,f)<\epsilon$ such that, for each $i\leq k$,  $\phi_{\mathcal U,g}(i)\supseteq \phi_{\mathcal U,f}(i)$  and such that for each sequence $\seq{j_i}_{i\in\omega}$ in $\{1,\ldots, k\}^\omega$ which satisfies $j_{i+1}\in\phi_{\mathcal U, g}(j_i)$, there is a point $x\in X$ with $g^i(x)\in {U_{j_i}}$. Furthermore, if $j\notin\phi_{\mathcal U,g}(i)$ then $\overline{g(U_i)}\cap\overline U_j=\ns$.
\end{lemma}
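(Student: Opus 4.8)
The plan is to build $g$ so that it is \emph{Markov} with respect to $\mathcal U$, in the sense that each admissible transition of $g$ is witnessed by a genuine onto-map on a subcontinuum; the realization of patterns then follows from a standard nested-continuum argument, and closeness to $f$ is controlled through Lemma \ref{approx}. Since $X$ is a graphite, I would first invoke Lemma \ref{approx} to fix $\lambda>0$ and a graph $G\subseteq X$ with a retraction $\pi_G$ that is a $\lambda$-map, so that it suffices to define $g$ as $g_0\circ\pi_G$ for a map $g_0$ on $G$ that is $\lambda$-close to $f|_G$; this both guarantees $\rho(g,f)<\epsilon$ and lets me exploit the arc structure of $G$. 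The target property I would aim for is the following \emph{covering condition} $(\star)$: for every $i$ and every $j\in\phi_{\mathcal U,g}(i)$ there is a subcontinuum $S_{ij}\subseteq\overline{U_i}$ with $g(S_{ij})=\overline{U_j}$. Because each $\overline{U_j}$ has diameter less than $\epsilon/5$ and, for $j\in\phi_{\mathcal U,f}(i)$, already meets $f(\overline{U_i})$, such an onto-map can be installed by routing a short subarc of $\overline{U_i}$ (after applying $\pi_G$) across $\overline{U_j}$ while displacing points by less than $\epsilon$; local connectedness together with Hahn--Mazurkiewicz supplies the required surjection onto the Peano continuum $\overline{U_j}$. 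Carrying out these routings disjointly, on the private parts furnished by condition (2) of tautness, keeps $g$ well defined and continuous and forces $\phi_{\mathcal U,g}(i)\supseteq\phi_{\mathcal U,f}(i)$.

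Granting $(\star)$, I would prove the realization statement by a backward nesting argument. Given $\langle j_i\rangle$ with $j_{i+1}\in\phi_{\mathcal U,g}(j_i)$, I construct continua $L_0\supseteq L_1\supseteq\cdots$ with $L_m\subseteq\overline{U_{j_0}}$, $g^m(L_m)=\overline{U_{j_m}}$, and $g^i(L_m)\subseteq\overline{U_{j_i}}$ for $i\le m$. Starting from $L_0=\overline{U_{j_0}}$, at each step I use that $\overline{U_{j_m}}=g^m(L_m)$ contains $S_{j_m j_{m+1}}$ and pull this subcontinuum back: the elementary fact that a surjection between continua admits, for every subcontinuum of its image, a subcontinuum of its domain mapping onto it, yields first an $L'\subseteq L_m$ with $g^m(L')=S_{j_m j_{m+1}}$ and then an $L_{m+1}\subseteq L'$ with $g^{m+1}(L_{m+1})=\overline{U_{j_{m+1}}}$, while the earlier containments are inherited. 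Any point of the nonempty nested intersection $\bigcap_m L_m$ then satisfies $g^i(x)\in\overline{U_{j_i}}$ for all $i$, as required.

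The hard part will be guaranteeing $(\star)$ for \emph{every} admissible transition of $g$, not merely for those inherited from $f$. The perturbation that installs the onto-maps may enlarge the images $g(\overline{U_i})$ just enough to meet additional closures $\overline{U_l}$, and each such meeting creates a transition $i\to l$ of $g$ that the realization argument insists be onto-covered; yet a closure that $g(\overline{U_i})$ merely touches along its boundary cannot be covered by any subcontinuum of $\overline{U_i}$. The essential point is therefore to design the construction so that $\phi_{\mathcal U,g}$ coincides exactly with the intended transition structure and admits no boundary-only transitions. I plan to achieve this by arranging each image $g(\overline{U_i})$ to be a union $\bigcup_{j\in\Phi(i)}\overline{U_j}$ of full members of the cover, with $\Phi(i)\supseteq\phi_{\mathcal U,f}(i)$ chosen in general position so that, using the finiteness of $\mathcal U$, the $\epsilon/5$ diameter bound, and property (1) of tautness (closures meet precisely when the open sets do), every $\overline{U_l}$ meeting $g(\overline{U_i})$ is in fact contained in it. With this control one has $\phi_{\mathcal U,g}(i)=\Phi(i)$, each of whose transitions is onto-covered by construction, so $(\star)$ holds and the two previous paragraphs complete the proof.
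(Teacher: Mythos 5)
Your overall architecture (a Markov-type transition structure plus a nested pull-back argument) is the right idea and is close in spirit to the paper, but two of your key steps are genuinely false, and they are exactly the points where the paper's construction differs from yours. First, the ``elementary fact'' your nesting argument rests on --- that every continuous surjection between continua admits, for each subcontinuum of its image, a subcontinuum of the domain mapping onto it --- is the statement that every such map is \emph{weakly confluent}, and this is false. Take $q:[0,1]\to S^1$, $q(t)=e^{2\pi i t}$, and the arc $C=\{e^{i\theta}:\theta\in[-\pi/2,\pi/2]\}$: then $q^{-1}(C)=[0,1/4]\cup[3/4,1]$, each component maps onto only half of $C$, and hence no subcontinuum of $[0,1]$ maps onto $C$. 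This is not a pathology you can ignore here: graphites contain simple closed curves (graphs with cycles, the Menger curve, Julia sets), and the Hahn--Mazurkiewicz surjections $q_j:[0,1]\to\overline{U_j}$ with which you ``install'' the onto-maps are precisely the kind of maps that wrap around loops and fail weak confluence. So the step producing $L'\subseteq L_m$ with $g^m(L')=S_{j_m j_{m+1}}$ can fail, and the induction collapses. (A further, smaller, problem: $\overline{U_j}$ need not be a Peano continuum --- closures of connected open subsets of Peano continua can fail to be locally connected --- so even the Hahn--Mazurkiewicz step needs care; the paper's Section 4 uses small Peano continua from Nadler's Theorem 8.10 for exactly this reason.)

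Second, the repair you propose in your last paragraph is topologically impossible. If every $\overline{U_l}$ that meets $g(\overline{U_i})$ is contained in $g(\overline{U_i})$, then $g(\overline{U_i})$ has empty boundary: any boundary point lies in some $U_l$, whose closure then meets $g(\overline{U_i})$, forcing $U_l\subseteq g(\overline{U_i})$ and making the point interior. A nonempty closed set with empty boundary in the connected space $X$ is all of $X$, so $g(\overline{U_i})=X$ for every $i$, which is incompatible with $\rho(g,f)<\epsilon$ and $\mathrm{diam}\, f(\overline{U_i})<\epsilon/5$ once $\epsilon$ is small relative to $\mathrm{diam}(X)$. The paper escapes both traps with one substitution, which is the idea you are missing: the Markov structure is not installed on whole cover elements but on small, pairwise disjoint free arcs $I_i\subseteq G\cap C_i$, where $C_i$ is a ``core'' of $U_i$ whose $\eta$-neighborhood meets no other $\overline{U_j}$. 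For each designated transition $j\in\phi(i)$, a subarc $[a^i_j,b^i_j]\subseteq I_i$ is mapped \emph{linearly, hence homeomorphically}, onto $I_j$, while all remaining images are confined away from the closures $\overline{U_l}$ with $l\notin\phi(i)$ (this is what the separation conditions on $\eta$ and the set $\pi_G\bigl(\bigcup_{j\in\phi(i)}U_j\setminus\bigcup_{l\notin\phi(i)}\overline{U_l}\bigr)$ accomplish). Homeomorphisms between arcs pull subarcs back trivially, so the nested-arc intersection argument needs no weak confluence; and since realization only requires $g^i(x)\in\overline{U_{j_i}}$, landing in the small arc $I_{j_i}\subseteq\overline{U_{j_i}}$ suffices --- there is no need for any transition to cover a whole $\overline{U_j}$, and consequently no need for the impossible ``neighbor-closed image'' condition.
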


\begin{proof}
	Let $\mathcal U=\{U_1,U_2,\ldots U_k\}$ be the taut open cover consisting of connected sets as described above and let $\phi_f$ be the pattern of $f$ with respect to $\mathcal U$. For each $i\leq k$, define $V_i$ to be the core of $U_i$ in $\mathcal U$ and recall that by Lemma \ref{tautcover}, this is nonempty and open.
	
	Choose $\eta>0$ such that $\eta<\epsilon$ and such that 
	\begin{enumerate}
		\item if $U_j\cap U_i=\ns$, then $d(\overline{U_i},\overline{U_j})>2\eta$,
		%		\item if $i\neq j\neq k\neq i$, and both $U_i\cap U_j$ and $U_j\cap U_k$ are nonempty, then $d(U_i\cap U_j,U_j\cap U_k)>3\eta$, and
		\item for all $i\leq k$, the set $\{x\in U_i:   B_\eta(x)\subseteq V_i\}$ is nonempty, and
		\item for each $i,j\leq k$ with $U_i\cap U_j\neq\ns$, the set $\{x\in X: B_{\eta}(x)\subseteq U_i\cap U_j\}$ is nonempty,
	\end{enumerate}
	
	Now, choose $\lambda>0$ so that $\lambda<\eta/2$ and so that $\lambda$ satisfies Lemma \ref{approx} with respect to $\eta$ and the function $f$,
	and let $\pi_G:X\to G$ be a $\lambda$-map onto a graph $G\subseteq X$.
	
	For $i\in\{1,2,\ldots,k\}$, define \[\phi(i)=\left\{j:{U_j}\cap\bigcup_{l\in\phi_f(i)}{U_l}\neq\ns\right\}.\]
	Notice that for all $i$, $\phi(i)\supseteq\phi_f(i)$ . Our goal is to construct $g$ so that $\phi_g=\phi$. 
	
	%Notice that if $j\notin\phi(i)$, then $j\notin \phi_f(i)$, and so the distance between $\overline{U_j}$ and  $\bigcup_{l\in\phi_f(i)}\overline{U_l}$ is positive, so we can choose $\eta>0$ such that $d(\overline{U_j}, \bigcup_{l\in\phi_f(i)}\overline{U_l})<2\eta$ implies that $j\in\phi(i)$. Without loss of generality, we take $\eta<\epsilon/5$.

	The first step in the construction is to define, for each $i\leq k$, a connected subgraph $G_i$ of $G$ such that if $j\in\phi(i)$, then the set $\{x\in G_i: B_\lambda(x)\subseteq V_j\}$ has nonempty interior in $G$ and if $j\notin\phi(i)$, then $d(G_i,\overline U_j)>\eta$. These subgraphs will form the frame on which $g$ will be constructed. Figure \ref{phig} is a schematic of this construction.
	
	%	\begin{enumerate}
	%		\item $G_i\subseteq \bigcup_{l\in\phi(i)}U_l$ and
	%		\item $d(G_i,\overline U_j)<\eta$ if and only if $j\in\phi(i)$.
	%	\end{enumerate}
	%	To accomplish this, first define $C_i=\pi_G(\bigcup_{l\in\phi_f(i)}\overline{U_l})$ and $H_i=\pi_G(\bigcup_{l\in\phi(i)}\overline{U_l})$ . Notice that for each $l\in\phi(i)$, $\overline{U_l}$ is connected and meets the connected set $f(U_i)$, and thus $C_i$ is connected. Since $j\in\phi(i)$ only if $U_j$ meets one of the sets $U_l$ with $l\in\phi_f(i)$, it follows that $H_i$ is also connected. Finally, define $G_i$ to be the component of $H_i\setminus\bigcup_{l\notin\phi(i)} B_\eta(\overline U_l)$ which contains $C_i$.
	
	%	To accomplish this, first define $C_i=\pi_G(\bigcup_{l\in\phi_f(i)}\overline{U_l})$. Notice that for each $l\in\phi(i)$, $\overline{U_l}$ is connected and meets the connected set $f(U_i)$, and thus $C_i$ is connected. Now, for each $j\in\phi(i)$, consider $D_j:=\pi_G(\overline{U_j})$ GOAL: FIND A SUBSET OF $D_j$ WHICH MEETS $C_i$ AND CONTAINS POINTS AT LEAST $\lambda$ FROM $U_l$ FOR $l\neq j$ AND WHICH IS AT LEAST $\lambda$ AWAY FROM ANY $U_l$ with $l\notin \phi(i)$.
	
	Towards this end, fix $i\leq k$ and $j\in\phi(i)$, and define $D_j:=\pi_G(\overline{U_j})$. By properties (1) and (3) of $\eta$ and the fact that $\pi_G$ is a $\lambda$-map, $D_j$ is a connected, closed subgraph of $G$, and $D_j\cap U_l\neq\ns$ if and only if $U_j\cap U_l\neq\ns$. %Furthermore $D_j\subseteq B_\lambda(U_j)\subseteq B_\eta(U_j)$, and by property (1) of $\eta$, if $U_l\cap U_j=\ns$, then $d(D_j,\overline U_l)>\eta$. 
	Now, define $W=\bigcup_{l\notin\phi(i)} B_\eta(\overline{U_l})$, and consider $D_j\setminus W$. Since $j\in\phi(i)$, choose $j'\in\phi_f(i)$ such that $U_j\cap U_{j'}\neq\ns$ and define $C_j$ to be a component of $D_j\setminus W$ which meets $U_{j'}$.  We claim that $G_i=\bigcup_{j\in\phi(i)}C_j$ satisfies the above requirements.
	
	\begin{figure}[h]
		\includegraphics[width=5in]{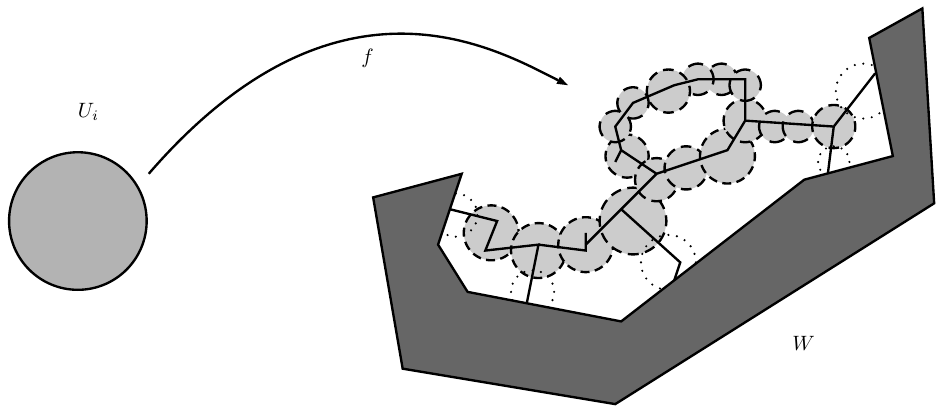}
		\caption{The filled neighborhoods on the right represent those elements of $\mathcal U$ which belong to $\phi_f(i)$, the dotted neighborhoods represent those which belong to $\phi(i)\setminus\phi_f(i)$, and the solid graph in their union is $G_i$. The filled polygonal region represents $W$} \label{phig}
	\end{figure}
	
	Notice that for all $j\in\phi(i)$, we have $C_j\cap W=\ns$, and thus  $d(C_j,\overline{U_l})>\eta$ for all $l\notin \phi(i)$. It follows immediately that  $d(G_i,\overline{U_l})>\eta$ for all $l\notin \phi(i)$.
	
	To verify that $G_i$ is connected, first notice that if $j\in\phi_f(i)$, then for each $l\notin\phi(f)$, $U_j\cap U_l=\ns$, and so $d(\overline{U_j}, \overline{U_l})>2\eta$, and therefore $D_j\cap W=\ns$. It follows then that $C_j=D_j=\pi_G(\overline{U_j})$. Furthermore, $C_j$ meets $\pi_G(f(U_i))$, since $U_j\cap f(U_i)\neq\ns$. If $j\in\phi(i)\setminus\phi_f(i)$, then there exists $j'\in\phi_f(i)$ with $C_j$ meeting $U_{j'}$. Since $\pi_G$ is a retraction, $\pi_G(C_j\cap U_{j'})\subseteq C_j\cap D_{j'}=C_j\cap C_{j'}$. Thus, for each $j\in\phi(i)$, there exists $l\in\phi_f(i)$ such that $C_j$ meets $C_l$. Furthermore, for $l\in\phi(i)$, $C_l$ meets $\pi_G(f(U_i))$, and since all of these sets are connected, their union, $G_i$ is also connected.
	
	Now to verify the final property of $G_i$, that for each $j\in\phi(i)$ the set $\{x\in G_i: B_\lambda(x)\subseteq V_j \}$ must have nonempty interior in $G$. We will demonstrate that for each $j$, the set $\{x\in C_j: B_\lambda(x)\subseteq V_j \}$ has nonempty interior in $C_j$.

	If $j\in\phi(i)$, this is simple since $C_j=D_j=\pi_G(\overline{U_j})$, fix $p\in U_j$ such that $B_\eta(p)\subseteq V_j$ as guaranteed by property (3) of $\eta$. Then $B_{\eta/2-\lambda}(\pi_G(p))\cap C_j$ is a nonempty open subset of $C_j$ which is contained in $B_{\eta/2}(p)$, and for each $x\in B_{\eta/2-\lambda}(\pi_G(p))$, we have $B_\lambda(x)\subseteq B_{\lambda+\eta/2}(p)\subseteq V_j$. 
	
	If $j\notin\phi_f(i)$, then there are two cases to consider. First, if $D_j\cap W=\ns$, then $C_j=D_j$, and as before, the set of points $x$ in $C_j$ for which $B_\lambda(x)\subseteq V_j$ has nonempty interior in $C_j$. If $D_j\cap W\neq\ns$, then $C_j$ is a proper component of $D_j$ which meets $U_{j'}$ for some $j'\in\phi_f(i)$ and meets $\partial W$ by the Boundary Bumping Theorem. Let $p\in C_j\cap\partial W$ and fix $w\leq k$ such that $w\notin\phi(i)$ and $d(p,\overline{U_w})=\eta$. Notice that this implies that $U_w\cap U_j\neq\ns$. Then for $l\notin\phi(i)$, $d(p,\overline{U_l})\geq\eta$ by construction of $W$. For $l\in\phi(i)$, if $l\neq j$ and $d(p,\overline{U_l})<\eta$, then both $d(\overline{U_l},\overline{U_j})$ and $d(\overline{U_l},\overline{U_w})$ are less than $2\eta$, and therefore $U_l\cap U_j$ and $U_l\cap U_w$ are both nonempty. But this implies that $U_j$, $U_l$ and $U_w$ form a three cycle in the nerve of $\mathcal U$, a contradiction. Thus, for all $l\neq j$, $B_\eta(p)\cap \overline U_j=\ns$, and therefore, for all $x\in B_\lambda(p)\cap C_j$, we have $B_\lambda(x)\subseteq V_j$.

	Having verified that the graphs $G_i$ have the specified properties, we will now select a family of \emph{free arcs} in $G$ (i.e. arcs which do not contain endpoints or branchpoints of $G$) which will be stretched across the subgraphs $G_i$ to construct the map $g$. For $i\in\phi(p)$, the set $\{x\in G_p: B_\lambda(x)\subseteq V_i\}$ has nonempty interior in $G$, so we can define $L_{i,p}\subseteq\{x\in G_p: B_\lambda(x)\subseteq V_i\}$ to be a free arc in $G$. Similarly, for all $i\leq k$,  we can choose an arc $L_{i,0}\subseteq U_i\cap G$ which is free in $G$ and such that $B_\lambda(L_{i,0})\subseteq V_i$ (the arc $L_{i,0}$ is not strictly necessary if $i$ is in the range of $\phi$).
	
	Since there are only finitely many such arcs to choose, we can, by shrinking them if necessary, choose these arcs so that the family $\{L_{i,p}:i\in\phi(p), p\leq k\}\cup\{L_{i,0}:i\leq k\}$ is pairwise disjoint. It is important to point out that for each pair $i,p$, $B_\lambda(L_{i,p})\subseteq V_i$, and thus if $d(L_{j,p},U_i)<\lambda$, then $i=j$. In particular, if $x\in U_i$ and $\pi_G(x)\in L_{j,p}$, then $j=i$.
	
	Now, we begin construction of $g$. For each pair $i,p\leq k$ with $p=0$ or with $i\in\phi(p)$ , we begin by defining a function $g_{i,p}:L_{i,p}\to G_i$. Fix $b_{i,p}(0)$ and $a_{i,p}(k+1)$ to be the endpoints of $L_{i,p}$ and order this arc so that $b_{i,p}(0)<a_{i,p}(k+1)$. Now, for $0<j\leq k$ choose points $a_{i,p}(j)$ and $b_{i,p}(j)$ in $L_{i,p}$ such that \[b_{i,p}(0) < a_{i,p}(1)<b_{i,p}(1)<a_{i,p}(2)<\cdots<a_{i,p}(k)<b_{i,p}(k)<a_{i,p}(k+1).\]
	
	The function $g_{i,p}$ is then defined as follows (See Figure \ref{gip}):
	\begin{enumerate}
		\item $g_{i,p}(b_{i,p}(0))=\pi_G(f(b_{i,p}(0)))$,
		\item $g_{i,p}(a_{i,p}(k+1))=\pi_G(f(a_{i,p}(k+1)))$,
		\item for $j\in\phi(i)$, $g_{i,p}$ maps the subarc $[a_{i,p}(j),b_{i,p}(j)]$ via a homeomorphism onto the arc $L_{j,i}$, and
		\item for each component $(b_{i,p}(j),a_{i,p}(l))$ on which $g_{i,p}$ is not already defined, define $g_{i,p}$ to map $[b_{i,p}(j),a_{i,p}(l)]$ via homeomorphism onto a subarc of $G_i$ from $g_{i,p}(b_{i,p}(j))$ to $g_{i,p}(a_{i,p}(l))$.
	\end{enumerate}

	\begin{figure}[h] 
		\includegraphics[width=4in]{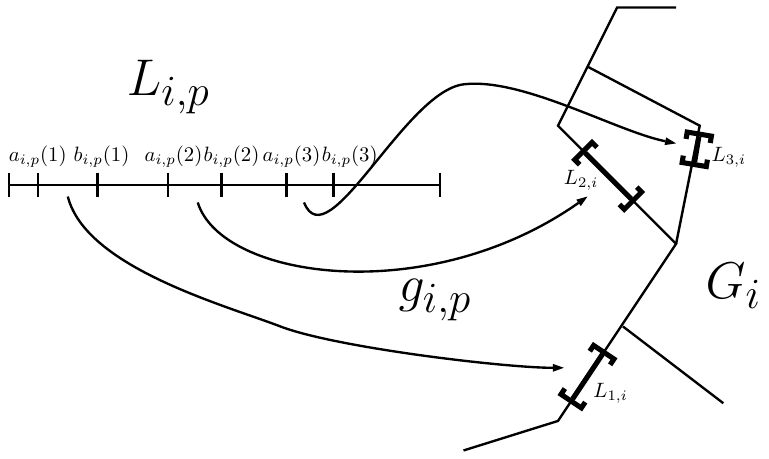}
		\caption{The map $g_{i,p}$}\label{gip}
	\end{figure}

	Finally, define $g:X\to X$ by
	\[g(x)=\begin{cases} 
	\pi_G(f(\pi_G(x))) & \pi_G(x)\notin\bigcup L_{i,p} \\
	g_{i,p}(\pi_G(x)) & \pi_G(x)\in {L_{i,p}}
	\end{cases}\]
	
	Note that $g$ is continuous by construction. All that remains is to establish that $g$ satisfies the claimed properties.
	
	First, we will demonstrate that $\rho(f,g)<\epsilon$. For $x\in X$, there are two cases; either $\pi_G(x)\notin\bigcup L_{i,p}$ or $\pi_G(x)\in L_{i,p}$ for some pair $i,p\leq k$ with $p=0$ or $i\in\phi(p)$.
	
	For $x\in X$ with $\pi_G(x)\notin\bigcup L_{i,p}$, this follows from Lemma \ref{approx}. Since $\pi_G$ is a $\lambda$-map and $\lambda$ satisfies the hypotheses of Lemma \ref{approx} for the function $f$, and since $\pi_G\circ f|G$ is in $B_\lambda(f|G)$, then $\pi_G\circ f\circ\pi_G\in B_\eta(f)$. Finally, since $\eta<\epsilon$, if $\pi_G(x)\notin\bigcup_{i\in\phi(p)}L_{i,p}$, we have $g(x)=\pi_G\circ f\circ\pi_G(x)$ is within $\epsilon$ of $f(x)$. 
	
	Now, fix $i,p\leq k$ with $p=0$ or $i\in\phi(p)$. If $x\in X$ with $\pi_G(x)\in L_{i,p}$, then $d(x,L_{i,p})<\lambda$, and so $x\in B_\lambda(L_{i,p})\subseteq U_i$, and thus $f(x)\in f(U_i)\subseteq \bigcup_{l\in\phi_f(i)} U_l\subseteq \bigcup_{l\in\phi(i)} U_l$. But $g(x)\in g(L_{i,p})\subseteq G_i\subseteq\bigcup_{l\in\phi(i)}$, and so $d(f(x),g(x))<diam(\bigcup_{l\in\phi(i)}U_l)$. Now, for each $l\in\phi(i)$, there exists $j\in\phi_f(i)$ with $U_l\cap U_j\neq \ns$ and $U_j\cap f(U_i)\neq\ns$, and so
	\[diam\left(\bigcup_{l\in\phi(i)}U_l\right)<diam(f(U_i))+2\max\{diam(U_l):l\leq k\}<\epsilon.\] 
	Thus $d(f(x),g(x))<\epsilon$ as desired.
	
	Next, we will demonstrate that $\phi_g=\phi$, and hence $\phi_g\supseteq\phi_f$. Fix $i\leq k$ and $j\in\phi(i)$ and observe that there exists $p\leq k$ (possibly equal to zero) such that $L_{i,p}\subseteq U_i$. By construction, $g(L_{i,p})$ contains $L_{j,i}$. But $L_{j,i}\subseteq U_j$, and so $g(U_i)\cap U_j\neq\ns$, i.e $j\in\phi_g(i)$. Now, consider $i\leq k$ and $j\in\phi_g(i)$ and let $x\in U_i$ witness this. Then $g(x)\in U_j$. If $\pi_G(x)\in  L_{j,p}$ for some $j\leq k$, then by construction of these arcs, $j=i$, and thus $g(x)\in G_i$, and hence $U_j\cap G_i\neq\ns$, in which case $j\in\phi(i)$. Otherwise, $\pi_G(x)\notin\bigcup L_{i,p}$, in which case, $g(x)=\pi_G\circ f\circ\pi_G(x)\in B_\eta(f(x))$ by choice of $\lambda$ and Lemma \ref{approx}. But $f(x)\in U_l$ for some $l\in\phi_f(i)$, and hence $f(\overline{U_l},\overline{U_j})<\eta$. By choice of $\eta$, this implies that $U_j$ meets $U_l$ and since $l\in\phi_f(i)$, we have $j\in\phi(i)$. Thus $\phi_g(i)=\phi(i)$ as claimed, and since $\phi(i)\supseteq \phi_f(i)$ by construction, we have that $\phi_g(i)\supseteq\phi_f(i)$.

	Now, let $\seq{j_i}_{i\in\omega}$ be a sequence satisfying $j_{i+1}\in\phi_g(j_i)$ for all $i\in\omega$ we will demonstrate that there exists $x\in X$ such that for all $i\in\omega$, $g^i(x)\in U_{j_i}$. For simplicity of notation, define $j_{-1}=0$. Then, by construction, for each $i\in\omega$, $g(L_{j_{i},j_{i-1}})$ contains $L_{j_{i+1},j_i}$ and so, by compactness, the set $\bigcap_{i\in\omega}g^{-i}(L_{j_i,j_{i-1}})$ is nonempty.
	Now, choose $x\in \bigcap_{i\in\omega}g^{-i}(L_{j_i,j_{i-1}})$,  and notice that for all $i\in\omega$,  $g^i(x)\in L_{j_{i},j_{i-1}}\subseteq U_{j_i}$ as claimed.
	
	Finally, suppose that $x\in U_i$, and $j\notin \phi_g(i)$. If $\pi_G(x)\in L_{i,p}$ for some $p\leq k$, in which case $g(x)\in G_i$, but $d(G_i,\overline U_l)\geq\eta$ by construction. Else, $\pi_G(x)\notin L_{j,p}$ for any pair $j,p\leq k$. In this case $d(g(x),f(x))<\eta$ by Lemma \ref{approx} and thus $d(g(x),U_l)<\eta$ for some $l\in\phi_f(i)$. If $d(g(x),U_j)<\eta$, then $d(U_j,U_l)<2\eta$, and by property (1) of $\eta$, it follows that $U_j\cap U_l\neq\ns$, and therefore $j\in\phi_g(i)$, a contradiction. Thus $d(\overline{g(U_i)},\overline {U_j})\geq\eta$, and in particular, $\overline{g(U_i)}\cap\overline {U_j}=\ns$ as claimed.
	
	%	Now, for each $t\in\phi(i)$ consider the set of all arcs in $\overline{U_t}$. We claim that there exists an arc $A_l$ in this collection that meets $G_i'$ and $U_t$ but is more than $\eta$ from $U_j$ for all $j\notin\phi(i)$. To see this, notice that $\pi_G(\overline{U_t})$ is a connected subgraph of $G$ that meets $G_i'$ since $\overline{U_t}$ meets $\bigcup_{l\in\phi_f(i)}\overline{U_l}$ by definition of $\phi$. Furthermore, by (2) above, $\pi_G(U_t)$ meets $U_t$ as well, so the set of all arcs in $\pi_G(U_t)$ which meet both $G_i'$ and $U_t$ is nonempty. Now, suppose that all such arcs contained at least one point within $\eta$ of some $U_j$ with $j\notin\phi(i)$.
\end{proof}

\begin{lemma}\label{orbit-filling}
	Let $g$, $\eta$, $\lambda$, $G$, $\pi_G$ and $\{L_{i,p}\}$ be as defined in Lemma \ref{skeleton}. There exists $\gamma>0$ such that for all maps $h:X\to X$ with $\rho(h,g)<\gamma$, $\phi_{\mathcal U,g}=\phi_{\mathcal U, h}$ and for each sequence $\seq{j_i}_{i\in\omega}$ in $\{1,\ldots, k\}^\omega$ which satisfies $j_{i+1}\in\phi_{\mathcal U, h}(j_i)$, there is a point $x\in X$ with $h^i(x)\in {U_{j_i}}$.
\end{lemma}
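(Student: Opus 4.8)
The statement packages two claims about every $h$ with $\rho(h,g)<\gamma$: that $\phi_{\mathcal U,h}=\phi_{\mathcal U,g}$, and that every pattern admitted by $\phi_{\mathcal U,h}$ is realized by a genuine $h$-orbit. The plan is to take $\gamma$ to be the minimum of three quantities, one controlling each inclusion of the first claim and a third (the delicate one) controlling orbit realization. Throughout I would lean on two features of the map $g$ produced in Lemma \ref{skeleton}: first, $\phi_{\mathcal U,g}=\phi$, so that for $j\notin\phi(i)$ the compact sets $g(\overline{U_i})$ and $\overline{U_j}$ are disjoint; and second, each arc $I_j$ sits inside the deep set $C_j$, every point of which lies at distance more than $\eta$ from every $\overline{U_l}$ with $l\neq j$. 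Write $A^i_j=[a^i_j,b^i_j]$ for the subarc of $I_i$ that $g$ carries linearly onto $I_j$ when $j\in\phi(i)$.

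For $\phi_{\mathcal U,h}=\phi_{\mathcal U,g}$ I would prove both inclusions by compactness and deepness. For the inclusion $\phi_{\mathcal U,h}(i)\subseteq\phi(i)$, set $\gamma_1:=\min\{d(g(\overline{U_i}),\overline{U_j}):j\notin\phi(i)\}>0$; if $\gamma<\gamma_1$ then $h(\overline{U_i})$ stays within $\gamma$ of $g(\overline{U_i})$ and hence misses $\overline{U_j}$ whenever $j\notin\phi(i)$. For the reverse inclusion, fix $j\in\phi(i)$ and an interior point $p$ of $I_j$; then $p=g(q)$ for some $q\in A^i_j\subseteq\overline{U_i}$, and $p\in C_j$, so $d(p,\overline{U_l})>\eta$ for all $l\neq j$. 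If $\gamma<\eta$, then $h(q)$ lies within $\gamma$ of $p$, so $d(h(q),\overline{U_l})>\eta-\gamma>0$ for $l\neq j$; since $\mathcal U$ covers $X$, the only possibility is $h(q)\in\overline{U_j}$, giving $j\in\phi_{\mathcal U,h}(i)$.

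Orbit realization is the heart of the matter, and the place I expect essentially all the work to lie. Because a perturbation $h$ need not keep iterates on the arcs, I would first fix for each $j$ a neighborhood $W_j$ of $I_j$ in $X$ together with a retraction $r_j:W_j\to I_j$ (arcs are absolute retracts, hence neighborhood retracts), chosen to displace points of $W_j$ by only a little. Let $\beta>0$ be a uniform lower bound, over all $j$ and all $j'\in\phi(j)$, for the distance from the endpoints of $A^j_{j'}$ to the endpoints of $I_j$. The two ingredients are then: (i) a forward control statement---if $h^m(x)$ lies within $\epsilon^*$ of $I_{j_m}$ and $r_{j_m}(h^m(x))\in A^{j_m}_{j_{m+1}}$, then $h^{m+1}(x)$ lies within $\epsilon^*$ of $I_{j_{m+1}}$, because $g$ carries $A^{j_m}_{j_{m+1}}$ onto $I_{j_{m+1}}$ and $h$ is $\gamma$-close to $g$; and (ii) a covering statement---transporting $I_{j_{m+1}}$ to $[0,1]$ and applying the intermediate value theorem, any arc whose endpoints are sent by $r_{j_{m+1}}\circ h^{m+1}$ to within $\beta$ of the two endpoints of $I_{j_{m+1}}$ contains a subarc carried onto $A^{j_{m+1}}_{j_{m+2}}$ endpoints-to-endpoints. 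The crucial obstacle is that error must not accumulate over the infinitely many iterates, so I would choose $\epsilon^*$ small enough that the self-map $\epsilon\mapsto\gamma+\mu(\mathrm{disp}(r;\epsilon))$ preserves $[0,\epsilon^*]$ and keeps every error below both $\beta$ and $\eta$, where $\mu$ is a modulus of continuity for $g$ and $\mathrm{disp}(r;\epsilon)$ bounds the displacement of the retractions on $\epsilon$-neighborhoods of the $I_j$; then I would shrink $\gamma$ to match. Since each stage applies $h$ just once to a set already positioned over the correct subarc, this invariance pins every iterate within $\epsilon^*$ of the relevant $I_{j_m}$.

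With these constants fixed, and given a pattern $\seq{j_i}_{i\in\omega}$ with $j_{i+1}\in\phi_{\mathcal U,h}(j_i)=\phi(j_i)$, I would build a nested sequence of closed subarcs $A^{j_0}_{j_1}=J_0\supseteq J_1\supseteq\cdots$ of $I_{j_0}$ so that $r_{j_m}(h^m(J_m))\supseteq A^{j_m}_{j_{m+1}}$ with endpoints sent to endpoints; the base case is immediate since $r_{j_0}$ fixes $J_0$, and the inductive step is exactly ingredients (i) and (ii), with the margin $\beta$ absorbing the uniformly bounded error. Each $J_m$ is nonempty and compact, so $\bigcap_m J_m\neq\ns$, and any $x$ in this intersection has $r_{j_m}(h^m(x))\in A^{j_m}_{j_{m+1}}\subseteq C_{j_m}$ with $h^m(x)$ within $\epsilon^*<\eta$ of $I_{j_m}$; the same forcing argument used for the reverse inclusion above then gives $h^m(x)\in\overline{U_{j_m}}$ for every $m$. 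Taking $\gamma$ to be the minimum of $\gamma_1$, $\eta$, and the tracing bound completes the proof, and I expect the nested-arc bookkeeping to be routine once the non-accumulation estimate is secured.
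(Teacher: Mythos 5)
Your first claim ($\phi_{\mathcal U,h}=\phi_{\mathcal U,g}$ for all $h$ with $\rho(h,g)<\gamma$) is handled essentially as in the paper: compactness gives the separation constant for the inclusion $\phi_{\mathcal U,h}(i)\subseteq\phi(i)$, and the deepness of the arcs $I_j\subseteq C_j$ forces the reverse inclusion; that half is fine. The genuine gap is in the orbit-realization half, at exactly the point you flag as ``the crucial obstacle.'' Your forward-control step requires $\gamma+\mu(\mathrm{disp}(r;\epsilon^*))\leq\epsilon^*$, and no choice of $\epsilon^*$ and $\gamma$ can achieve this. For any retraction $r_j:W_j\to I_j$ one has $\mathrm{disp}(r;\epsilon)\geq\epsilon$ (a point's displacement under $r_j$ is at least its distance to $I_j$), so at a minimum you need $\mu(\epsilon^*)<\epsilon^*$, i.e.\ that $g$ is non-expansive at scale $\epsilon^*$. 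But $g$ was built in Lemma \ref{skeleton} precisely to stretch each tiny subarc $[a^i_j,b^i_j]$ onto the entire arc $I_j$: already at scale $t=\mathrm{diam}([a^i_j,b^i_j])$ one gets $\mu(t)\geq d(b^j_0,a^j_{1+k})$, and since $g$ expands along these subarcs, $\mu(\mathrm{disp}(r;\cdot))$ dominates the identity on the whole relevant range of scales. Since $\gamma$ enters only additively, shrinking it cannot help; the self-map $\epsilon\mapsto\gamma+\mu(\mathrm{disp}(r;\epsilon))$ never preserves an interval $[0,\epsilon^*]$, the error genuinely accumulates, and your induction collapses after finitely many steps.

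The idea you are missing is structural rather than metric: the map $g$ of Lemma \ref{skeleton} is defined so that it factors through the graphite retraction $\pi_G$, i.e.\ $g=g\circ\pi_G$ (the value $g(x)$ depends only on $\pi_G(x)$, and $\pi_G$ is a retraction). Consequently, replacing $h^n(x)$ by its projection $\pi_G(h^n(x))$ costs \emph{nothing} when $g$ is next applied: $g(h^n(x))=g(\pi_G(h^n(x)))$ exactly, with no modulus-of-continuity penalty at all. The paper exploits this by making the induction hypothesis an exact equality, $\pi_G(h^n(L_n))=[a^{j_n}_1,b^{j_n}_k]$: at each stage one selects $a<b$ in $L_n$ with $\pi_G(h^n([a,b]))=[a^{j_n}_{j_{n+1}},b^{j_n}_{j_{n+1}}]$, so that $g\circ\pi_G\circ h^n$ lands exactly on $I_{j_{n+1}}$, and the only errors incurred are the one-step bound $\rho(h,g)<\gamma$ and the controlled distortion of $\pi_G$ (condition (4) in the paper's choice of $\gamma$). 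These errors reset at every step instead of compounding through $\mu$. Your nested-arc, endpoints-to-endpoints, intermediate-value bookkeeping then goes through as you describe; what cannot be ``secured'' is the non-accumulation estimate in the form you posed it --- it must be replaced by the factorization $g=g\circ\pi_G$ through the globally defined retraction supplied by the graphite structure, which is the very reason $g$ was constructed through $\pi_G$ in the first place.
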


\begin{proof}
	First, observe that for each $i$, $\overline{g({U_i})}$ meets only those $\overline{U_j}$ with $j\in\phi_{g}(i)$. In particular, then, for each $i$, there exists $\tau_i>0$ such that $d(g(\overline{U_i})\cap \overline{U_j})<{\tau_i}$ if and only if $j\in\phi_{g}(i)$. Taking $\tau$ to be the minimum of the $\tau_i$, we observe that if $h:X\to X$ satisfies $\rho(g,h)<\tau$, then $h(\overline U_i)\subseteq \bigcup_{j\in\phi_{g}(i)}\overline U_j\setminus \bigcup_{j\notin\phi_{g}(i)}\overline U_j$, and thus $\phi_{h}(i)\subseteq\phi_{g}(i)$.
	
	Now, we need only determine a tolerance which assures the other inclusion. Towards this end, choose $\xi>0$ such that, for all $i,p\leq k$ with $p=0$ or $i\in\phi_g(p)$, we have $B_\xi(L_{i,p})\cap G$ is an arc (this is possible since each $L_{i,p}$ is free in $G$) and such that \[\xi<\min\{d(b_{i,p}(0),[a_{i,p}(1), b_{i,p}(k)]), d(a_{i,p}(k+1),[a_{i,p}(1), b_{i,p}(k)])\}.\] In particular, if $b,a\in G$ with both $d(b, b_{i,p}(0))$ and $d(a,a_{i,p}(k+1))$ less than $\xi$, then the arc from $b$ to $a$ contained in $B_\xi(L_{i,p})\cap G$ contains $[a_{i,p}(1), b_{i,p}(k)]$.
	
	Having chosen $\xi$, we choose $\gamma>0$ such that 
	\begin{enumerate}
		\item $\gamma<\tau$,
		\item $\gamma<\lambda$,
		\item $\gamma<\xi$,
		\item if $d(x,y)<\gamma$, then $d(\pi_G(x),\pi_G(y))<\xi$, and
		\item if $d(x,y)<\gamma$, then $d(g(x),g(y))<\xi$.
	\end{enumerate}  We claim that this $\gamma$ is the necessary tolerance.
	
	Let $h:X\to X$ with $\rho(g,h)<\gamma$. By (1) and the first paragraph above, $\phi_{h}(i)\subseteq\phi_{g}(i)$. Now, choose $p\leq k$ such that $L_{i,p}\subseteq U_i$ is defined. Let $j\in\phi_{g}(i)$, and consider $x\in [b_{i,p}(j),a_{i,p}(j)]\subseteq L_{i,p}\subseteq G$. Observe that by (2) $h(x)\in B_\gamma(g(x))\subseteq B_\lambda (L_{j,i})\subseteq V_j\subseteq U_j$ by choice of $\gamma$, and so $j\in\phi_{h}(i)$. This establishes that $\phi_{g}=\phi_{h}$.
	
	Now, let $\seq{j_i}_{i\in\omega}$ in $\{1,\ldots, k\}^\omega$ be a sequence which satisfies $j_{i+1}\in\phi_{h}(j_i)$. We will inductively demonstrate the existence of the desired point by constructing a nested sequence of arcs, each of which witnesses the desired pattern for a finite (but increasing) length. 
	
	First, let $I_0=[a_{j_0,0}(j_1),b_{j_0,0}(j_1)]\subseteq L_{j_0,0}$. Since $d(h(a_{j_0,0}), g(a_{j_0,0}))<\gamma$, we have $d(\pi_G(h(a_{j_0,0})), \pi_G(g(a_{j_0,0}))<\xi$, but we also have $\pi_G(g(a_{j_0,0}(j_1)))=\pi_G(b_{j_1,j_0}(0))=b_{j_1,j_0}(0)$, so $d(h(a_{j_0,0}(j_1)),b_{j_1,j_0}(0))<\xi$. Similarly, we have $d(h(b_{j_0,0}(j_1)),a_{j_1,j_0}(k+1))<\xi$. Thus $\pi_G(h(I_0))$ is a connected subset of $B_\xi(L_{j_1,j_0})\cap G$ which contains points within $\xi$ of both endpoints of $L_{j_1,j_0}$, and thus by choice of $\xi$, $\pi_G(h(I_0))\supseteq [a_{j_1,j_0}(1),b_{j_1,j_0}(k)]$. Since $\pi_G\circ h$ is a continuous map, there exists an arc $I_1\subseteq I_0\subseteq U_{j_0}$ with $\pi_G(h(I_1))=[a_{j_1,j_0}(1),a_{j_1,j_0}(k)]$.
	
	Now, suppose that $I_n$ is a subarc of $I_{n-1}$ such that
	\begin{enumerate}
		\item $h^n(I_n)\subseteq B_\gamma(L_{j_n, j_{n-1}})$,
		\item $\pi_G(h^{n}(I_n))=[a_{j_{n}, j_{n-1}}(1),b_{j_{n}, j_{n-1}}(k)]\subseteq L_{j_n,j_{n-1}}$, and
		\item $h^{n-1}(I_n)\subseteq U_{j_{n-1}}$.
	\end{enumerate}    % As $\pi_G\circ h$ is continuous, we can find an arc $L_{n+1}=[a,b]\subseteq L_n$ with $\pi_G(h(L_{n+1}))=a^{j_{n}}_{j_{n+1}}$
	
	Now, since $\pi_G\circ h$ is continuous, we can fix $a<b\in L_n$ such that
	\begin{enumerate}
		\item $\pi_G(h^n(a))=a_{j_{n}, j_{n-1}}(j_{n+1})$,
		\item  $\pi_G(h^n(b))=b_{j_{n}, j_{n-1}}(j_{n+1})$, and
		\item  $\pi_G(h^n([a,b]))=[a_{j_{n}, j_{n-1}}(j_{n+1}),b_{j_{n}, j_{n-1}}(j_{n+1})]$.
	\end{enumerate}   
	
	Now, for all $x\in[a,b]$, $h^{n+1}(x)$ is within $\gamma$ of $L_{j_{n+1}, j_n}$ because we have $d(h^{n+1}(x),g(h^n(x))<\gamma$ and $g(h^n(x))=g(\pi_G(h^n(x)))$ belongs to the set $g([a_{j_{n}, j_{n-1}}(j_{n+1}),b_{j_{n}, j_{n-1}}({j_{n+1}})])=L_{j_{n+1}, j_n}$. Thus $h^{n+1}([a,b])\subseteq B_\gamma(L_{j_{n+1},j_n})$.
	
	Since $\pi_G$ is a $\lambda$-map and a retraction, we see $ h^n([a,b])\subseteq B_\lambda(L_{j_n,j_{n-1}})\subseteq U_{j_n}$.
	
	Also, $h^{n+1}(a)=h(h^n(a))$ is within $\gamma$ of $g(h^n(a))=g(\pi_G(h^n(a)))=g(a_{j_{n},j_{n-1}}({j_{n+1}}))=b_{j_{n+1}, j_n}(0)$ and thus we see that $\pi_G(h^{n+1}(a))$ is within $\xi$ of $\pi_G(b_{j_{n+1}, j_n}(0))=b_{j_{n+1}, j_n}(0)$. Similarly, $\pi_G(h^{n+1}(b))$ is within $\xi$ of $b_{j_{n+1}, j_n}(k+1)$. 
	
	Thus $\pi_G(h^{n+1}([a,b]))$ is a connected subset of $B_\xi(L_{j_{n+1},j_n})\cap G$ which contains points within $\xi$ of both endpoints of $L_{j_{n+1},j_n}$, and so $\pi_G(h^{n+1}([a,b])) \supseteq [a_{j_{n+1},j_n}(1),b_{j_{n+1},j_n}(k+1)]$. Thus, by continuity, we can choose a subarc $I_{n+1}$ of $[a,b]$ with $\pi_G(h^{n+1}(I_{n+1})) = [a_{j_{n+1},j_n}(1),b_{j_{n+1},j_n}(k+1)]$. Since $I_{n+1}\subseteq[a,b]$, we also have $h^{n+1}(I_{n+1})\subseteq B_\gamma(L_{j_{n+1},j_n})$ and $h^n(I_{n+1})\subseteq U_{j_n}$.

	By construction, we see that if $x\in I_n$, then $h^i(x)\in U_{j_i}$ for all $i<n$. By compactness $\bigcap_{n\in\omega}I_n\neq\ns$, and for any $x\in \bigcap_{n\in\omega}I_n$, we have $h^i(x)\in{U_{j_i}}$. Since the sequence $\seq {j_i}$ chosen was arbitrarily, the function $h$ has the desired property.

\end{proof}

\begin{lemma}\label{Tn}
	For $g:X\to X$ as in Lemma \ref{skeleton} and $\gamma>0$ as in Lemma \ref{orbit-filling}, $B_\gamma(g)\subseteq \mathcal T_n$.
\end{lemma}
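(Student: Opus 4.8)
The plan is to verify that every $h$ in $B_\gamma(g)$ lies in $\mathcal T_n$, which by definition requires producing, for each such $h$, a single $\delta>0$ for which every $\delta$-pseudo-orbit of $h$ is $1/n$-shadowed by a genuine orbit of $h$. The essential dynamical content has already been established in Lemmas \ref{skeleton} and \ref{orbit-filling}, so what remains is to package it. I would fix $h\in B_\gamma(g)$ and recall from Lemma \ref{orbit-filling} the two facts I intend to use: first, $\phi_{\mathcal U,h}=\phi_{\mathcal U,g}$, and second, every sequence $\seq{j_i}_{i\in\w}$ with $j_{i+1}\in\phi_{\mathcal U,h}(j_i)$ is realized by a point $x$ with $h^i(x)\in\overline{U_{j_i}}$ for all $i$.

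Next I would choose the tolerance $\delta$. As observed in Section \ref{Prelim}, for $\delta$ sufficiently small every $\delta$-pseudo-orbit of $h$ admits a $\mathcal U$-pseudo-orbit pattern; concretely, I would take $\delta$ smaller than a Lebesgue number of $\mathcal U$ and also smaller than $\min\{d(h(\overline{U_i}),\overline{U_j}): j\notin\phi_{\mathcal U,h}(i)\}$, which is positive since each $h(\overline{U_i})$ is compact and meets only the finitely many $\overline{U_j}$ indexed by $\phi_{\mathcal U,h}(i)$. Then, given a $\delta$-pseudo-orbit $\seq{x_i}$, I would select for each $i$ an index $j_i$ with $x_i\in U_{j_i}$. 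Since $d(x_{i+1},h(x_i))<\delta$ and $x_{i+1}\in\overline{U_{j_{i+1}}}$, the point $h(x_i)\in h(\overline{U_{j_i}})$ lies within $\delta$ of $\overline{U_{j_{i+1}}}$, so by the choice of $\delta$ we must have $j_{i+1}\in\phi_{\mathcal U,h}(j_i)$. Hence $\seq{j_i}$ is exactly the kind of pattern to which Lemma \ref{orbit-filling} applies.

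Finally I would invoke Lemma \ref{orbit-filling} on this pattern to obtain a point $x$ with $h^i(x)\in\overline{U_{j_i}}$ for all $i$. Since $x_i\in U_{j_i}\subseteq\overline{U_{j_i}}$ as well, both $x_i$ and $h^i(x)$ lie in $\overline{U_{j_i}}$, whence $d(x_i,h^i(x))\le diam(\overline{U_{j_i}})=diam(U_{j_i})<\epsilon/5<1/n$. Thus the orbit of $x$ under $h$ $1/n$-shadows $\seq{x_i}$, so $h\in\mathcal T_n$; as $h\in B_\gamma(g)$ was arbitrary, $B_\gamma(g)\subseteq\mathcal T_n$. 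I do not expect any genuine obstacle here: the constructions in the two previous lemmas do all the heavy lifting, and the only points demanding care are the selection of $\delta$ so that a $\delta$-pseudo-orbit genuinely yields a valid $\mathcal U$-pattern with $x_i\in U_{j_i}$ (the Lebesgue-number-plus-separation argument above), together with the harmless observation that $\epsilon/5$-shadowing implies $1/n$-shadowing because $\epsilon<1/n$.
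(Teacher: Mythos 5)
Your proposal is correct and takes essentially the same approach as the paper: extract a $\mathcal U$-pseudo-orbit pattern $\langle j_i\rangle$ from an arbitrary $\delta$-pseudo-orbit, invoke Lemma \ref{orbit-filling} to realize that pattern by a true orbit of $h$, and conclude from the small diameters of the cover elements that this orbit $1/n$-shadows. The only cosmetic difference is in how the pattern condition $j_{i+1}\in\phi_{\mathcal U,h}(j_i)$ is verified: the paper takes $\delta$ to be a Lebesgue number of $\mathcal U$ and chooses $j_{i+1}$ so that $x_{i+1}$ and $h(x_i)$ lie in a common $U_{j_{i+1}}$, whereas you additionally shrink $\delta$ below a compactness-based separation constant; both verifications are equally valid.
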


\begin{proof}
	Let $h\in B_\gamma(g)$ and choose $\delta>0$ to be the Lebesgue number for $\mathcal U$. Let $\seq{x_i}_{i\in\omega}$ be a $\delta$-pseudo-orbit. Then for all $i\in\omega$ with $i>0$, the set $\{x_i,f(x_{i-1})\}$ has diameter less than $\delta$< and so we can fix $j_i\leq k$ such that $x_{i},f(x_{i-1})\in U_{j_i}$. Additionally, fix $j_0$ so that $x_0\in U_0$.  
	
	Then, for each $i\in\omega$, $j_{i+1}\in\phi_h(j_i)$, and so by Lemma \ref{orbit-filling}, there exists $x\in X$ such that for each $i\in\omega$, $h^i(x)\in {U_{j_i}}$. Then $x_i,h^i(x)\in U_{j_i}$, and so $d(x_i,h^i(x))<\textrm{diam}(U_{j_i})<\epsilon/3<1/n$, i.e. the point $x$ $1/n$-shadows $\seq{x_i}$.
	
	Thus $h\in \mathcal T_n$ as claimed.
\end{proof}

We are now ready to prove our main theorem. It is worth recalling that in the preceding sequence of lemmas, $X$ is assumed to be a graphite, and $f\in\cont(X)$.

\begin{theorem}
	Let $X$ be a graphite. Then $\shad(X)$ contains a dense $G_\delta$ subset of $\cont(X)$.
\end{theorem}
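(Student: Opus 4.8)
The plan is to assemble the three preceding lemmas into a routine Baire category argument. First I would record the identity $\shad(X)=\bigcap_{n\in\N}\mathcal{T}_n$. The inclusion $\shad(X)\subseteq\mathcal{T}_n$ is immediate from the definition of shadowing applied with $\epsilon=1/n$. For the reverse inclusion, suppose $f\in\mathcal{T}_n$ for every $n$; given any $\epsilon>0$, choose $n$ with $1/n<\epsilon$ and use the $\delta$ witnessing $f\in\mathcal{T}_n$ to conclude that every $\delta$-pseudo-orbit is $1/n$-shadowed and hence $\epsilon$-shadowed, so that $f\in\shad(X)$.

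Next I would show that each $\mathcal{T}_n$ has dense interior. Fix $n$, a map $f\in\cont(X)$, and $\epsilon>0$; shrinking $\epsilon$ if necessary, I may assume $1/n>\epsilon>0$, which is exactly the standing hypothesis under which Lemmas \ref{skeleton}, \ref{orbit-filling}, and \ref{Tn} were proved. These lemmas then produce a map $g$ with $\rho(g,f)<\epsilon$ and a $\gamma>0$ with $B_\gamma(g)\subseteq\mathcal{T}_n$. Since $g\in B_\epsilon(f)$, the open set $\operatorname{int}(\mathcal{T}_n)$ meets every ball $B_\epsilon(f)$, and so $W_n:=\operatorname{int}(\mathcal{T}_n)$ is a dense open subset of $\cont(X)$.

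Finally I would invoke completeness. Because $X$ is compact, $(\cont(X),\rho)$ is a complete metric space and hence a Baire space; therefore the $G_\delta$ set $\bigcap_{n\in\N}W_n$ is dense. Since $\bigcap_{n}W_n\subseteq\bigcap_{n}\mathcal{T}_n=\shad(X)$, the set $\shad(X)$ contains a dense $G_\delta$ subset, as claimed.

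I do not expect any genuine obstacle here: the substance of the argument resides entirely in the constructions of Lemmas \ref{skeleton} and \ref{orbit-filling}, and the theorem is a formal consequence of them together with the Baire category theorem. The only points requiring a small amount of care are the passage from ``$\mathcal{T}_n$ contains an open ball inside each $\epsilon$-neighborhood of every $f$'' to ``$\operatorname{int}(\mathcal{T}_n)$ is dense,'' and the corresponding observation that one should intersect the \emph{interiors} $W_n$ rather than the sets $\mathcal{T}_n$ themselves in order to guarantee that the resulting dense set is $G_\delta$.
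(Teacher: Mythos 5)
Your proof is correct and is essentially the paper's own argument: the three lemmas give, for every $f$ and $\epsilon$, a ball $B_\gamma(g)\subseteq\mathcal{T}_n$ inside $B_\epsilon(f)$, so each $\mathcal{T}_n$ contains a dense open set, and the Baire category theorem (applicable since $\cont(X)$ is complete) yields a dense $G_\delta$ inside $\bigcap_n\mathcal{T}_n\subseteq\shad(X)$. The only difference is that you spell out details the paper leaves implicit, namely the identity $\shad(X)=\bigcap_n\mathcal{T}_n$ (the paper needs only the containment) and the point that one intersects the interiors rather than the sets $\mathcal{T}_n$ themselves.
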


\begin{proof}
	By Lemmas \ref{skeleton}, \ref{orbit-filling}, and \ref{Tn}, for each $f\in\cont(X)$ and each $\epsilon>0$ there exists a map $g\in B_\epsilon(f)$ and $\gamma>0$ such that $B_\gamma(g)\subseteq\mathcal T_n$. In other words, $\mathcal T_n$ contains a dense open set. As $\shad(X)$ contains $\bigcap_{n\in\N}\mathcal T_n$, we have proven our claim.
\end{proof}

\section{Surjections of Graphites} \label{surj}

In fact, this process is a fair bit more robust than necessary in the preceding argument, and we can make a minor alteration of the proof of Lemma \ref{orbit-filling} that will allow us to demonstrate that shadowing is generic amongst surjections of graphites. 
%We make use of the fact that a locally connected continuum is necessarily locally arcwise connecected, and hence is locally the image of an arc. Mazurkeiwicz and Moore CITATIONS!!!!
Here we make use of the fact that since a graphite is a locally connected one-dimensional continuum, it is necessarily Peano. As such,  for all $\epsilon>0$, there exists a finite collection of Peano continua of diameter less than $\epsilon$, the union of which covers $X$ (Theorem 8.10 of \cite{Nadler}). We will also make use of the fact that any connected open subset of a Peano continuum is arc-connected (Theorem 8.26 of \cite{Nadler}).

\begin{theorem} \label{surjections}
	Let $X$ be a graphite. Then $\shad(X)$ contains a dense $G_\delta$ subset of $\surj(X)$.
\end{theorem}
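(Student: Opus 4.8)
The plan is to show that the dense open subsets of $\cont(X)$ produced in Lemmas \ref{skeleton}--\ref{Tn} can be arranged to meet $\surj(X)$ densely, and then to invoke the Baire category theorem inside $\surj(X)$. First I would record that $\surj(X)$ is closed in $\cont(X)$: if $f_m\to f$ uniformly with each $f_m$ surjective, then for any $y\in X$ we may write $y=f_m(x_m)$ and pass to a convergent subsequence $x_{m_\ell}\to x$ to get $f(x)=y$. Hence $\surj(X)$ is a complete metric space and Baire applies. For each $n$, the proof of the main theorem yields a dense open $O_n\subseteq\cont(X)$ with $O_n\subseteq\mathcal T_n$; since $O_n$ is open, $O_n\cap\surj(X)$ is open in $\surj(X)$, and it suffices to prove it is dense there, for then $\bigcap_n(O_n\cap\surj(X))$ is a dense $G_\delta$ subset of $\surj(X)$ contained in $\shad(X)$.

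Density reduces to a single construction: given $f\in\surj(X)$, $\epsilon>0$ and $n$, I must produce a \emph{surjection} $g\in B_\epsilon(f)$ that still satisfies the conclusions of Lemma \ref{skeleton}. Granting this, Lemmas \ref{orbit-filling} and \ref{Tn} apply to $g$ verbatim and furnish $\gamma>0$ with $B_\gamma(g)\subseteq\mathcal T_n$, so $g\in O_n\cap\surj(X)\cap B_\epsilon(f)$. The idea is to run the construction of Lemma \ref{skeleton} unchanged on the \emph{spine} arcs $[a^i_j,b^i_j]$ (these carry all of the realizability of pseudo-orbit patterns) and to modify only the free components in step (3), rerouting them so that the total image of $g$ exhausts $X$.

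The surjectivity is powered by the following consequence of surjectivity of $f$. Put $V_i=\bigcup_{j\in\phi(i)}U_j\setminus\bigcup_{l\notin\phi(i)}\overline{U_l}$, an open set. I claim $\{V_i\}_{i\le k}$ covers $X$: given $p\in X$ pick $q\in f^{-1}(p)$ and $i$ with $q\in U_i$; then $p=f(q)\in f(\overline{U_i})$, so any $U_j$ or $\overline{U_l}$ containing $p$ meets $f(\overline{U_i})$ and hence has index in $\phi_{\mathcal U,f}(i)\subseteq\phi(i)$, giving $p\in V_i$. Now fix a Lebesgue number for the open cover $\{V_i\}$ (and for $\mathcal U$), and use the fact recorded before Theorem \ref{surjections} (Theorem 8.10 of \cite{Nadler}) to cover $X$ by finitely many Peano continua $P_1,\dots,P_m$ each of diameter below that number; then each $P_t$ lies in some $V_{i(t)}$ and, being Peano, is a continuous image of an arc. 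I would assign each $P_t$ to the index $i(t)$ and have a free component of $I_{i(t)}$ make an excursion that sweeps through $P_t$ before returning to the spine, so that $\bigcup_i g(I_i)\supseteq\bigcup_t P_t=X$. Choosing $\mathcal U$ slightly finer at the outset (say $diam(U_i),diam(f(U_i))<\epsilon/7$) leaves enough room that the estimate of Lemma \ref{skeleton} still gives $\rho(g,f)<\epsilon$, and because every excursion stays inside $V_{i(t)}$ we retain $\phi_{\mathcal U,g}=\phi$, so Lemma \ref{skeleton}'s realizability conclusion (depending only on the unchanged spine) persists.

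The main obstacle is exactly the requirement that each excursion be a single continuous arc with prescribed spine endpoints whose \emph{entire} image remains in $V_{i(t)}$: were a connecting portion to stray into some $\overline{U_l}$ with $l\notin\phi(i(t))$, it would enlarge $\phi_{\mathcal U,g}(i(t))$ and introduce pseudo-orbit transitions that the spine cannot realize, breaking Lemma \ref{orbit-filling}. Overcoming this needs a connectivity argument inside $V_{i(t)}$: since $X$ is a graphite, hence Peano, the components of the open set $V_{i(t)}$ are open and arcwise connected, and $I_{j}\subseteq C_{j}\subseteq V_{i(t)}$ for each $j\in\phi(i(t))$, so I would pair each $P_t$ with a component of $V_{i(t)}$ also meeting the relevant spine and route the excursion within that single component. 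Verifying that the pairing can always be made---equivalently, that $\mathcal U$ and the Peano cover can be chosen so that each $P_t$ is joined to an appropriate $I_j$ without leaving $V_{i(t)}$---is where the local connectedness of $X$ is indispensable and constitutes the real work; for a mere graphoid neither the covering Peano continua nor the connecting arcs need exist.
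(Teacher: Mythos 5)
Your overall architecture is the same as the paper's: work inside the closed (hence Baire) subspace $\surj(X)$, rerun the skeleton construction so that the approximating map $g$ is a surjection by sweeping over a finite cover of $X$ by small Peano continua via Hahn--Mazurkiewicz, keep the spine and the equality $\phi_{\mathcal U,g}=\phi$ intact so that Lemmas \ref{orbit-filling} and \ref{Tn} apply verbatim, and conclude by density of the resulting balls in $\surj(X)$. Your covering claim for $\{V_i\}$ is correct (and is essentially the paper's observation that surjectivity of $f$ makes the assignment of Peano pieces surjective). But the proof is not complete, and you say so yourself: the step you defer as ``the real work''---routing each excursion from the spine through $P_t$ and back without leaving $V_{i(t)}$---is precisely the crux, and your assignment rule is what makes it problematic. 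Choosing $i(t)$ by a Lebesgue number for $\{V_i\}$ guarantees only the containment $P_t\subseteq V_{i(t)}$; it does not tie $P_t$ to anything $g$ already reaches from $I_{i(t)}$. The open set $V_{i(t)}$ need not be connected, and $P_t$ may lie in a component of $V_{i(t)}$ disjoint from every spine $I_j$, $j\in\phi(i(t))$, and from $f(\overline{U_{i(t)}})$; in that case no admissible excursion exists, and reassigning $P_t$ to a better index is exactly the unproved ``pairing'' your argument needs. Local connectedness does not rescue this by itself: arcwise connectivity of the components of $V_{i(t)}$ is of no use if the spine and $P_t$ sit in different components.

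The paper closes this gap not by a connectivity argument but by a different assignment, and this is the idea your proposal is missing. It assigns $P$ to $i$ precisely when $P\cap f(U_i)\neq\ns$ (the map $\psi$); every $P$ is so assigned because $f$ is surjective---this is your own covering computation, which in fact shows each $p\in X$ lies in $f(U_i)\cap V_i$ for a suitable $i$, a conjunction your Lebesgue-number step then throws away. In addition, the Peano cover is chosen so that each $P$ lies inside a single element of $\mathcal U$ and never meets two disjoint elements of $\mathcal U$. With that assignment, $P$ is anchored to the connected set $f(\overline{U_i})\subseteq V_i$, and the separation property yields $P\subseteq\bigcup_{j\in\phi(i)}U_j\setminus\bigcup_{l\notin\phi(i)}U_l$, so the sweep over $P$ (placed by the paper on a dedicated subinterval $[a^i_{k+j},b^i_{k+j}]$ of $I_i$ rather than on the connectors---a cosmetic difference) never enlarges $\phi_{\mathcal U,g}(i)$, and the connecting pieces can be routed as in Lemma \ref{skeleton}. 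In short: assign each Peano piece to those $i$ whose image $f(U_i)$ it meets, rather than to an index produced by a Lebesgue number; that single change removes the obstacle you correctly identified but did not overcome.
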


\begin{proof}
	
	Let $f$ be a surjection, and fix $n\in\N$ and $1/n>\epsilon>0$. We begin by choosing $\mathcal U$ as in the discussion following Lemma \ref{approx}, i.e. $\mathcal U$ is a finite taut open cover by connected sets with the diameters of each $U_i$ and $f(U_i)$ less than $\epsilon/3$ and whose nerve contains no three cycles. Now, by the aforementioned result from \cite{Nadler}, choose a collection $P_1,\ldots P_m$ of Peano continua which cover $X$ and have the properties that for each $P_i$, there exists $U_j\in\mathcal U$ with $P_i\subseteq U_j$ and if $U_l\cap U_j=\ns$ and $P_i\cap U_l\neq\ns$ then $d(P_i,U_j)>\epsilon$.  Let $\psi:\{1,\ldots, k\}\to 2^{\{1,\ldots, m\}}$ be the function defined by $j\in\psi(i)$ if $P_j\cap f(U_i)\neq\ns$. Note that since $f$ is a surjection, $\psi$ is surjective in the sense that for each $1\leq j\leq m$, there exists $i$ with $j\in\psi(i)$.
	
	%	Additionally, for each $1\leq i\leq m$, let $q_i:[0,1]\to P_i$ be a continuous surjection as guaranteed by the Hahn-Mazurkiewicz Theorem \cite{Nadler}.
	
	We now proceed in the manner of the proof of Lemma \ref{skeleton} until we define the maps $g_i$.
	
	For each $j\in\psi(i)$, define a subcontinuum $Q_{j,i}$ of $X$ as follows. Since $P_j$ meets $f(U_i)$, there exists $l\in\phi_f(i)$ with $U_l\cap P_j$. Since $U_l$ is arc-connected, we can choose an arc $A_{j,i}\subseteq U_l$ with one endpoint in $G_i$ and the other in $P_j$. Define $Q_{j,i}=P_j\cup A_{j,i}$. Notice that if $j\in\psi(i)$, then $P_j\subseteq\bigcup_{l\in\phi(i)}U_l$ and $d(P_j,U_l)>\epsilon$ for all $l\notin\phi(i)$. It then follows that $Q_{j,i}\subseteq\bigcup_{l\in\phi(i)} U_l$ and if $l\notin \phi(i)$, then $d(Q_{j,i},U_l)>\epsilon$.
	
	For each pair $i,p\leq k$ with $i\in\phi(p)$ or $p=0$, we choose a collection of points (this is a larger collection than in Lemma \ref{skeleton}) $b_{i,p}(0)<a_{i,p}(1)<b_{i,p}(1)<\cdots<a_{i,p}(k)<b_{i,p}(k)<a_{i,p}(k+1)<\cdots<a_{i,p}(k+m)<b_{i,p}(k+m)<a_{i,p}(k+m+1)$ in $I_{i,p}$.
	
	Then, to define $g_{i,p}:I_{i,p}\to X$ with these additional points, we split step (3) into steps (3a) and (3b) as indicated.
	
	\begin{enumerate}
		\item[(3a)] for $1\leq j\leq k$, if $j\in\phi(i)$, $g_{i,p}$ maps the subarc $[a_{i,p}(j),b_{i,p}(j)]$ via homeomorphism onto the arc $I_{j,i}$;
		\item[(3b)]  for $k+1\leq j\leq k+m$, if $j-k\in\psi(i)$, $g_{i,p}$ maps $[a_{i,p}(j),b_{i,p}(j)]$ onto $Q_{j,i}$ as follows. Since $Q_{j,i}$ is Peano  and meets $G_i$ we can, by the Hahn-Mazurkiewicz Theorem (Theorem 8.14 of \cite{Nadler}), define $g_{i,p}:[a_{i,p}(j),b_{i,p}(j)]\to Q_{j,i}$ to be a continuous surjection which maps both $a_{i,p(j)}$ and $b_{i,p}(j)$ to a point in $G_i$.
	\end{enumerate}
	
	The construction of the map $g$ then carries on as indicated in the proof of Lemma \ref{skeleton}.
	Note that this additional step guarantees that if $f$ is surjective, then $g$ is surjective as well (as each $P_j$ is covered at least once). Additionally, if $j\in\psi(i)$, the image of $[a_{i,p}({k+j}),b_{i,p}({k+j})]$ is equal to $Q_{j,i}$, which is a subset of $\bigcup_{l\in\phi(i)} U_l$.  Recall that if $l$ is not in $\phi(i)$,  $d(U_l,Q_{j,i})>\epsilon$. This guarantees that we maintain that $\phi=\phi_{g}$, and thus the map $g$ satisfies all the properties specified and is also a surjection.
	
	By the same argumentation as Lemma \ref{orbit-filling}, any map $h$ within $\gamma$ of $g$ also satisfies the properties specified in Lemma \ref{orbit-filling}, i.e. $B_\gamma(g)\subseteq \mathcal T_n$.
	
	Thus $B_\gamma(g)\cap \surj(X)$ is an open subset of $\surj(X)$ which belongs to $\mathcal T_n$. Thus $\mathcal T_n\cap\surj(X)$ is an open dense subset of $\surj(X)$, and by the same argumentation as in Lemma \ref{Tn}, we see that $\shad(X)$ contains a dense $G_\delta$ subset of $\surj(X)$.
\end{proof}

As a corollary to Theorem \ref{surjections}, we have the following. This result is an extension of those found in \cite{BMR-Dendrites}.

\begin{corollary}
	Let $X$ be a dendrite. Then $\shad(X)$ contains a dense $G_\delta$ subset of $\surj(X)$.
\end{corollary}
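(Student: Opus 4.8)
The plan is to derive this directly from Theorem \ref{surjections} by showing that every dendrite is a graphite. Since a graphite is exactly a locally connected graphoid and a dendrite is locally connected by definition, the only point to verify is the graphoid condition: that for each $\epsilon>0$ there is a graph $G\subseteq X$ and a retraction $\pi_G:X\to G$ all of whose fibers have diameter less than $\epsilon$. Because every tree is a graph, it suffices to produce, for each $\epsilon>0$, a finite tree $T\subseteq X$ together with a retraction $r:X\to T$ that is an $\epsilon$-map. This is precisely the Knaster-style approximation property recalled in Section \ref{Prelim}; although its equivalence with the standard definition of dendroid is open in general, for dendrites it is classical, and I would either cite it or construct $r$ by hand.

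To construct $r$ directly, I would fix a finite $\epsilon/2$-net for $X$ and let $T$ be its convex hull, i.e.\ the union of the unique arcs joining the net points. Unique arcwise connectedness guarantees that $T$ is a finite tree, and it lets us define the first-point retraction $r:X\to T$ sending each $x$ to the unique point at which the arc from $x$ to $T$ first meets $T$; this map is a well-defined continuous retraction onto $T$. Each fiber $r^{-1}(t)$ is then the closure of a component of $X\setminus T$ together with $t$.

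The main obstacle is verifying that $r$ is an $\epsilon$-map, i.e.\ that every such fiber has diameter less than $\epsilon$. The idea is that a point $x$ in the component $C$ with $\overline{C}\cap T=\{t\}$ lies within $\epsilon/2$ of some net point, and since every arc from $x$ to $T$ must exit $C$ through $t$, the point $t$ lies on the arc from $x$ to that net point; this forces $x$ to be close to $t$. Turning this into a uniform diameter bound is the delicate part and may require passing to a convex metric on the dendrite, or refining the net, to control the hanging branches uniformly. Once the $\epsilon$-map retraction onto a tree is in hand, $X$ is a graphoid and hence a graphite, and Theorem \ref{surjections} yields that $\shad(X)$ contains a dense $G_\delta$ subset of $\surj(X)$.
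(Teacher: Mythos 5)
Your proposal follows exactly the paper's route: the corollary is deduced by observing that every dendrite is a graphite and then invoking Theorem \ref{surjections}; the paper disposes of the first step by appeal to the comments in Section \ref{Prelim}, whereas you sketch the classical first-point-retraction construction of an $\epsilon$-map onto a tree spanned by a finite net. Your sketch is sound, and the convex-metric device you mention does close the one delicate point you flag: a dendrite, being a Peano continuum, admits a compatible convex metric, in which the first point $t$ of the arc from $x$ to a nearby net point satisfies $d(x,t)\leq d(x,p)$, and uniform equivalence of the two metrics on the compact space $X$ transfers the fiber-diameter bound back to the original metric.
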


\begin{proof}
	By comments in Section 2, every dendrite is a graphite. Applying Theorem \ref{surjections} completes the proof.
\end{proof}

\section{Conclusion} \label{conc}

As it contains, among other things, the Menger continuum, the class of graphites is indeed more general than the previously studied class of dendrites in \cite{BMR-Dendrites}. However, there are no known examples of locally connected one-dimensional continua which are \emph{not} graphites. This leaves the following open question.

\begin{question}
	Is every locally connected one-dimensional continuum a graphite?
\end{question}

Indeed, this question seems intricately related to the open problem concerning the characterization of dendroids mentioned in Section \ref{Prelim}. As such, answers to this question are naturally correlated with analogous questions for dendroids \cite{Eb-Fugate, Fugate1, Fugate2, Marsh-Prajs, Martinez}.

This question also leads naturally to the following notion. As mentioned in the introduction, there have been many results regarding the genericity of shadowing in the classes $\cont(X), \surj(X)$, and $\homeo(X)$ for various types of spaces $X$. A common thread amongst these results is that all of the spaces $X$ for which such results exist are locally connected continua. This leads to the following conjecture. A positive answer to the previous question would provide additional evidence.

\begin{conjecture}
	Let $X$ be a locally connected continuum. Then $\shad(X)$ is generic in $\cont(X)$ and $\surj(X)$. 
\end{conjecture}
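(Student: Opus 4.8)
The plan is to split the argument by dimension, reducing to a structural question in each regime and then feeding the outcome into the machinery already developed. For one-dimensional $X$ the cleanest route is to resolve the preceding Question affirmatively: to show that every locally connected one-dimensional continuum is a graphite. Granting this, the conjecture for such $X$ is immediate -- genericity in $\cont(X)$ from the main genericity theorem and genericity in $\surj(X)$ from Theorem \ref{surjections}. To produce the graphite structure I would start from the fact that such an $X$ is graph-like, $X=\invlim{G_n}{p_n}$, and try to leverage local connectedness to \emph{embed} a fine approximating graph back into $X$ as a genuine subgraph carrying an $\epsilon$-map retraction. The subtlety, and the reason graph-likeness alone is insufficient (as noted in Section \ref{Prelim}), is that the inverse-limit factors $G_n$ are not a priori subspaces of $X$, so local connectedness must be used to rectify this.

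For general, higher-dimensional Peano continua the graph machinery no longer applies, and the natural move is to replace graphs throughout by finite polyhedra. Concretely, I would introduce a ``polyhedron-oid'' analog of a graphoid -- a Peano continuum admitting, for each $\epsilon>0$, an $\epsilon$-map retraction onto a finite subcomplex -- and then re-run Lemmas \ref{approx} through \ref{Tn} with ``graph $G$'' replaced by ``polyhedron $P$''. The approximation Lemma \ref{approx} is insensitive to the dimension of $G$ and transfers verbatim. In the skeleton construction of Lemma \ref{skeleton}, the arcs $I_i$ would become cells of $P$, the linear maps onto subarcs would become surjections of cells onto subcomplexes (available via the Hahn--Mazurkiewicz theorem \cite{Nadler}, exactly as the $q_i$ are used in Theorem \ref{surjections}), and the combinatorial datum $\phi_{\mathcal U,g}$ with the realizability condition $j_{i+1}\in\phi_{\mathcal U,g}(j_i)$ is, in the sense of \cite{good-meddaugh}, a purely cover-theoretic object carrying no reference to dimension.

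The heart of the matter, and the step I expect to be the main obstacle, is the higher-dimensional analog of the orbit-filling Lemma \ref{orbit-filling}. That proof is genuinely one-dimensional: it uses the linear order on an arc to locate subintervals $[a,b]$ whose $\pi_G\circ h$-image is a prescribed subarc, and then extracts a shadowing point from a \emph{nested} sequence $L_0\supseteq L_1\supseteq\cdots$ by compactness. In a polyhedron of dimension at least two there is no such order, so realizing each successive transition $j_{i+1}\in\phi_{\mathcal U,h}(j_i)$ forces one to select, inside the previously constructed cell, a subcell whose image surjects onto the next cell, coherently enough that the intersection $\bigcap_n L_n$ remains nonempty and tracks the pattern. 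Supplying this selection -- perhaps through a subdivision and simplicial-approximation argument, or a Vietoris-type continuity argument replacing the naive nesting -- is where the real work lies, and it runs closely parallel to the manifold constructions of \cite{Mazur-Oprocha}.

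Finally, I would flag that even the one-dimensional reduction is not free: the assertion that every locally connected one-dimensional continuum is a graphite is exactly the open Question, and by the remarks in Section \ref{Prelim} it is entangled with Knaster's still-unresolved characterization of dendroids. Consequently a complete proof almost certainly requires either settling that characterization or devising a dimension-uniform argument that bypasses retraction onto finite complexes altogether -- for instance, building the shadowing map directly from a Hahn--Mazurkiewicz parametrization of a fine cover of $X$ and realizing pseudo-orbit patterns within the framework of \cite{good-meddaugh} without ever passing to a subcomplex.
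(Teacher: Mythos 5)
This statement is not a theorem of the paper but an open \emph{conjecture}; the paper offers no proof of it, and your proposal, by its own admission, does not constitute one either. There are two genuine gaps, both of which you correctly identify but neither of which you close. First, your one-dimensional reduction rests entirely on an affirmative answer to the paper's open Question (whether every locally connected one-dimensional continuum is a graphite). You gesture at ``leveraging local connectedness to embed a fine approximating graph back into $X$,'' but this is precisely the difficulty: the factor graphs $G_n$ of a graph-like representation are not subspaces of $X$, the bonding maps need not be near-homeomorphisms, and the paper notes this problem is entangled with Knaster's still-unresolved characterization of dendroids. No argument is supplied, so the one-dimensional case of the conjecture is reduced to a different open problem rather than proved.

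Second, and more fundamentally, your higher-dimensional program is missing its central lemma. The orbit-filling argument (Lemma \ref{orbit-filling}) is irreducibly one-dimensional: given that $\pi_G\circ h^n$ maps an arc onto an arc with endpoints landing $\xi$-close to the endpoints $b_0^{j}$, $a_{1+k}^{j}$, the order structure of the arc (in effect, the intermediate value theorem) produces a \emph{subarc} mapping exactly onto the prescribed subarc $[a_1^j,b_k^j]$, and this is what makes the sets $L_n$ nest and the intersection $\bigcap_n L_n$ nonempty. In dimension two or higher this mechanism simply fails: a continuous surjection of a cell onto a cell need not restrict to a surjection of any subcell onto a prescribed target subcell, and the boundary-control that substitutes for ``endpoints map near endpoints'' would require something like degree theory, which is unavailable for the maps at hand (they are not maps of closed manifolds, and they are only $\gamma$-perturbations of the constructed map $g$). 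Your suggestions of simplicial approximation or a ``Vietoris-type continuity argument'' are names for the missing step, not arguments. Since the stability of the construction under $\gamma$-perturbation is exactly what makes $\mathcal T_n$ contain a dense \emph{open} set, this gap is fatal to the genericity claim, not merely to a technical lemma. In short: the conjecture remains open, and your proposal is a reasonable research outline whose two load-bearing steps are both unproved.
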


In the class of homeomorphisms, the situation is less clear. This is in part due to the fact that locally connected continua, by nature of their heterogeneity, generally have a far sparser class of homeomorphisms than do the manifolds on which shadowing has been demonstrated to be generic in the class of homeomorphisms \cite{Pilyugin-Plam}.

It should be noted that demonstrating that shadowing is not generic in the class of continuous functions is rather difficult and has not yet been the subject of much study. This leads to the following final question.

\begin{question}
	Does there exist a continuum $X$ for which $\shad(X)$ is not generic in $\cont(X)$?
\end{question}

%%%%%%%%%%% To ease editing, use normal size for the references:

\normalsize


\begin{thebibliography}{[HD82]}

%% Use the widest label as parameter above.
%% Reference items can be numbered or have labels of your choice, as below.
%% Arrange the items in the alphabetical order of names (and not in the order of labels).

%% In IMPAN journals, only the title is italicized; boldface is not used.
%% Do NOT give the issue number unless the issues are paginated separately, as in Uspekhi below.

%% To ease editing, add:

\normalsize
\baselineskip=17pt


%%%%%%%%%%%%%

\bibitem{Anosov}
D.~V. Anosov.
\newblock {\em Geodesic flows on closed {R}iemann manifolds with negative
	curvature}.
\newblock Proceedings of the Steklov Institute of Mathematics, No. 90 (1967).
Translated from the Russian by S. Feder. American Mathematical Society,
Providence, R.I., 1969.

\bibitem{BMR-ToAppear}
Andrew~D. Barwell, Jonathan Meddaugh, and Brian~E. Raines.
\newblock Shadowing and {$\omega$}-limit sets of circular {J}ulia sets.
\newblock {\em Ergodic Theory Dynam. Systems}, 35(4):1045--1055, 2015.

\bibitem{BR-ToAppear}
Andrew~D. Barwell and Brian~E. Raines.
\newblock The {$\omega$}-limit sets of quadratic {J}ulia sets.
\newblock {\em Ergodic Theory Dynam. Systems}, 35(2):337--358, 2015.

\bibitem{Bowen}
R.~Bowen.
\newblock {$\omega $}-limit sets for axiom {${\rm A}$} diffeomorphisms.
\newblock {\em J. Differential Equations}, 18(2):333--339, 1975.

\bibitem{Brian-Ramsey}
W.~R. Brian.
\newblock Ramsey shadowing and minimal points.
\newblock {\em Proc. Amer. Math. Soc.}, 144(6):2697--2703, 2016.

\bibitem{BMR-Dendrites}
Will Brian, Jonathan Meddaugh, and Brian Raines.
\newblock Shadowing is generic on dendrites.
\newblock {\em Discrete Contin. Dyn. Syst. Ser. S}, 12(8):2211--2220, 2019.

\bibitem{BMR-Variations}
William~R. Brian, Jonathan Meddaugh, and Brian~E. Raines.
\newblock Chain transitivity and variations of the shadowing property.
\newblock {\em Ergodic Theory Dynam. Systems}, 35(7):2044--2052, 2015.

\bibitem{Car-Gam}
Lennart Carleson and Theodore~W. Gamelin.
\newblock {\em Complex dynamics}.
\newblock Universitext: Tracts in Mathematics. Springer-Verlag, New York, 1993.

\bibitem{Charalambous}
Michael Charalambous.
\newblock The dimension of inverse limits.
\newblock {\em Proceedings of the American Mathematical Society}, 58:289--289,
07 1976.

\bibitem{Coven}
E.~M. Coven, I.~Kan, and J.~A. Yorke.
\newblock Pseudo-orbit shadowing in the family of tent maps.
\newblock {\em Trans. Amer. Math. Soc.}, 308(1):227--241, 1988.

\bibitem{Eb-Fugate}
C.~A. Eberhart and J.~B. Fugate.
\newblock Approximating continua from within.
\newblock {\em Fund. Math.}, 72(3):223--231, 1971.

\bibitem{Fakhari}
Abbas Fakhari and F.~Helen Ghane.
\newblock On shadowing: ordinary and ergodic.
\newblock {\em J. Math. Anal. Appl.}, 364(1):151--155, 2010.

\bibitem{Fugate1}
J.~B. Fugate.
\newblock Retracting fans onto finite fans.
\newblock {\em Fund. Math.}, 71(2):113--125, 1971.

\bibitem{Fugate2}
J.~B. Fugate.
\newblock Small retractions of smooth dendroids onto trees.
\newblock {\em Fund. Math.}, 71(3):255--262, 1971.

\bibitem{good-meddaugh}
Chris Good and Jonathan Meddaugh.
\newblock Shifts of finite type as fundamental objects in the theory of
shadowing.
\newblock {\em Invent. Math.}, 220(3):715--736, 2020.

\bibitem{GrebogiYorke}
C.~Grebogi, S.~M. Hammel, J.~A. Yorke, and T.~Sauer.
\newblock Shadowing of physical trajectories in chaotic dynamics: containment
and refinement.
\newblock {\em Phys. Rev. Lett.}, 65(13):1527--1530, 1990.

\bibitem{KMPK}
Piotr Ko\'{s}cielniak, Marcin Mazur, Piotr Oprocha, and \L~ukasz Kubica.
\newblock Shadowing is {G}eneric on {V}arious {O}ne-{D}imensional {C}ontinua
with a {S}pecial {G}eometric {S}tructure.
\newblock {\em J. Geom. Anal.}, 30(2):1836--1864, 2020.

\bibitem{Mazur-Oprocha}
Piotr Ko\'scielniak, Marcin Mazur, Piotr Oprocha, and Pawe\l Pilarczyk.
\newblock Shadowing is generic---a continuous map case.
\newblock {\em Discrete Contin. Dyn. Syst.}, 34(9):3591--3609, 2014.

\bibitem{sakai}
K.~Lee and K.~Sakai.
\newblock Various shadowing properties and their equivalence.
\newblock {\em Discrete Contin. Dyn. Syst.}, 13(2):533--540, 2005.

\bibitem{Mai}
Jie-Hua Mai and Song Shao.
\newblock Spaces of {$\omega$}-limit sets of graph maps.
\newblock {\em Fund. Math.}, 196(1):91--100, 2007.

\bibitem{Marsh-Prajs}
M.M. Marsh and J.R. Prajs.
\newblock Internally k-like spaces and internal inverse limits.
\newblock {\em Topology and its Applications}, 164:235 -- 241, 2014.

\bibitem{Martinez}
Ver\'onica Mart\'\i nez-de-la Vega.
\newblock The property {RNT} on dendroids.
\newblock {\em Glas. Mat. Ser. III}, 37(57)(2):375--382, 2002.

\bibitem{MR}
Jonathan Meddaugh and Brian~E. Raines.
\newblock Shadowing and internal chain transitivity.
\newblock {\em Fund. Math.}, 222(3):279--287, 2013.

\bibitem{Mizera}
Ivan Mizera.
\newblock Generic properties of one-dimensional dynamical systems.
\newblock In {\em Ergodic theory and related topics, {III} ({G}\"ustrow,
	1990)}, volume 1514 of {\em Lecture Notes in Math.}, pages 163--173.
Springer, Berlin, 1992.

\bibitem{Nadler}
Sam~B. Nadler, Jr.
\newblock {\em Continuum theory}, volume 158 of {\em Monographs and Textbooks
	in Pure and Applied Mathematics}.
\newblock Marcel Dekker, Inc., New York, 1992.
\newblock An introduction.

\bibitem{Odani}
Kenzi Odani.
\newblock Generic homeomorphisms have the pseudo-orbit tracing property.
\newblock {\em Proc. Amer. Math. Soc.}, 110(1):281--284, 1990.

\bibitem{Ombach2}
J.~Ombach.
\newblock Shadowing, expansiveness and hyperbolic homeomorphisms.
\newblock {\em J. Austral. Math. Soc. Ser. A}, 61(1):57--72, 1996.

\bibitem{Oprocha-Thick}
P.~Oprocha.
\newblock Shadowing, thick sets and the ramsey property.
\newblock {\em Ergodic Theory Dynam. Systems}, 2015.

\bibitem{palmer}
Ken Palmer.
\newblock {\em Shadowing in dynamical systems}, volume 501 of {\em Mathematics
	and its Applications}.
\newblock Kluwer Academic Publishers, Dordrecht, 2000.
\newblock Theory and applications.

\bibitem{OpenProblems2}
Elliott Pearl, editor.
\newblock {\em Open problems in topology. {II}}.
\newblock Elsevier B. V., Amsterdam, 2007.

\bibitem{Pilyugin-Plam}
S.~Yu. Pilyugin and O.~B. Plamenevskaya.
\newblock Shadowing is generic.
\newblock {\em Topology Appl.}, 97(3):253--266, 1999.

\bibitem{Pilyugin-Orbit}
S.~Yu. Pilyugin, A.~A. Rodionova, and K.~Sakai.
\newblock Orbital and weak shadowing properties.
\newblock {\em Discrete Contin. Dyn. Syst.}, 9(2):287--308, 2003.

\bibitem{Pilyugin-LipStruct}
Sergei~Yu. Pilyugin and Sergey Tikhomirov.
\newblock Lipschitz shadowing implies structural stability.
\newblock {\em Nonlinearity}, 23(10):2509--2515, 2010.

\bibitem{shimomura}
Takashi Shimomura.
\newblock The pseudo-orbit tracing property and expansiveness on the {C}antor
set.
\newblock {\em Proc. Amer. Math. Soc.}, 106(1):241--244, 1989.

\bibitem{Slackov}
S.~V. {\v{S}}la{\v{c}}kov.
\newblock Pseudo-orbit tracing property and structural stability of expanding
maps of the interval.
\newblock {\em Ergodic Theory Dynam. Systems}, 12(3):573--587, 1992.

\bibitem{WaltersFiniteType}
Peter Walters.
\newblock On the pseudo-orbit tracing property and its relationship to
stability.
\newblock In {\em The structure of attractors in dynamical systems ({P}roc.
	{C}onf., {N}orth {D}akota {S}tate {U}niv., {F}argo, {N}.{D}., 1977)}, volume
668 of {\em Lecture Notes in Math.}, pages 231--244. Springer, Berlin, 1978.

\bibitem{Yano}
Koichi Yano.
\newblock Generic homeomorphisms of {$S^1$} have the pseudo-orbit tracing
property.
\newblock {\em J. Fac. Sci. Univ. Tokyo Sect. IA Math.}, 34(1):51--55, 1987.


\end{thebibliography}
\end{document}